\theoremstyle{plain}
\newtheorem{theorem}{Theorem}[section]
\newtheorem*{theorem*}{Theorem}
\newtheorem{proposition}[theorem]{Proposition}
\newtheorem*{proposition*}{Proposition}
\newtheorem*{theoreme*}{Théorème}
\newtheorem*{frproposition*}{Proposition}
\newtheorem{lemma}[theorem]{Lemma}
\newtheorem*{lemma*}{Lema}
\newtheorem{corollary}[theorem]{Corollary}
\newtheorem*{lemme*}{Lemme}
\theoremstyle{remark}
\theoremstyle{definition}
\newtheorem*{definition*}{Definición}
\newtheorem*{frdefinition*}{Définition}
\newcommand\NN{\mathbb{N}}
\newcommand\ZZ{\mathbb{Z}}
\newcommand\KK{\mathbb{k}}
\newcommand\FF{\mathbb{F}}
\newcommand\orb[1]{\mathcal{O}_{#1}}
\newcommand\clorb[1]{\overline{\orb{#1}}}
\newcommand\func{\mathcal S}
\newcommand\gsch{\mathcal G}
\newcommand\defo[1]{\mathcal #1}
\newcommand\fser{\KK\llbracket t \rrbracket}
\newcommand\lser{\KK(\!(t)\!)}
\newcommand\corr[1]{\mathrm{T}(#1)}
\newcommand\extesc[2]{#1\otimes_{#2}\KK(\!(t)\!)}
\DeclareMathOperator\id{id}
\DeclareMathOperator\Tor{Tor}
\DeclareMathOperator\Hom{Hom}
\DeclareMathOperator\im{Im}
\DeclareMathOperator\GL{GL}
\DeclareMathOperator\gGL{GL_0}
\newcommand{\inc}[1][]{
\ifthenelse{\equal{#1}{}}{\tau}{\tau_{#1}}
}
\newcommand{\proj}[1][]{
\ifthenelse{\equal{#1}{}}{\pi}{\pi_{#1}}
}
\newcommand{\unit}[1][]{
\ifthenelse{\equal{#1}{}}{\eta}{\eta_{#1}}
}
\newcommand{\ass}[2][null]{
\ifthenelse{\equal{#1}{null}}{\mathsf{Ass}_{#2}}{\mathsf{Ass}_{#2}(#1)}
}
\newcommand{\alg}[2][null]{
\ifthenelse{\equal{#1}{null}}{\mathsf{Alg}_{#2}}{\mathsf{Alg}_{#2}(#1)}
}
\newcommand{\gass}[2][null]{
\ifthenelse{\equal{#1}{null}}{\mathsf{GrAss}_{#2}}{\mathsf{GrAss}_{#2}(#1)}
}
\newcommand{\galg}[2][null]{
\ifthenelse{\equal{#1}{null}}{\mathsf{GrAlg}_{#2}}{\mathsf{GrAlg}_{#2}(#1)}
}
\newcommand{\liea}[2][null]{
\ifthenelse{\equal{#1}{null}}{\mathsf{Lie}_{#2}}{\mathsf{Lie}_{#2}(#1)}
}
\title{On geometric degenerations and Gerstenhaber formal deformations}
\begin{document}

\begin{abstract}
   {
   We study the degeneration relations on the varieties of associative and Lie algebra 
structures on a fixed finite dimensional vector space and give a description of them in 
terms of Gerstenhaber formal deformations. We use this result to show how the orbit 
closure of the $3$-dimensional Lie algebras can be determined using homological algebra. 
For the case of finite dimensional associative algebras, we prove that the $N$-Koszul 
property is preserved under the degeneration relation for all $N\geq2$.
   }
\end{abstract}
\author{Sergio Chouhy}
\thanks{{\footnotesize Institute of Algebra and Number Theory, University of 
Stuttgart, Pfaffenwaldring 57, 70569 Stuttgart,
Germany. The author is a DAAD-ALEARG fellow. Email: \emph{schouhy@dm.uba.ar}}}
\maketitle

\textbf{2010 Mathematics Subject Classification: 13D10.} 

\textbf{Keywords:} deformations, degenerations, homological methods. 

\section{Introduction}
The deformation theory of algebraic structures has its origin in the 
sixties in analogy with the deformation theory of 
complex varieties developed by Kodaira, Nirenberg, Spencer and Kuranishi 
\cite{Ko05}.
Let $V$ be a finite dimensional vector space over an algebraically closed field $\KK$. 
The set $X$ of all possible structure coefficients of a given algebraic structure on $V$ 
is an algebraic variety. Nijenhuis, Richardson \cite{NR64} and Gabriel \cite{Ga74} 
established the basis for the geometric theory of algebraic structures, which consists in 
studying the geometry of $X$ and its relation with the algebraic properties of its 
points.

For instance, let $X$ be the set of all maps $\varphi\in \Hom_\KK(V\otimes_\KK V,V)$ that 
satisfy the associativity condition $\varphi\circ(\varphi\otimes\id) = 
\varphi\circ(\id\otimes\varphi)$. Thus, $X$ is the variety of associative 
algebra structures on $V$. The group $\GL(V)$ of linear isomorphisms acts on $X$ via the 
formula $g\cdot \varphi = g\circ \varphi\circ (g^{-1}\otimes g^{-1})$ and the orbits are 
in correspondence with the isomorphism classes of associative algebras of dimension 
$\dim_\KK(V)$. Subsets of $X$ consisting of algebras with particular properties usually 
have nice geometric attributes. For example, the properties of having a unit element or of 
being of finite representation type are open in $X$ for the Zariski topology \cite{Ga74}. 
There is also a strong relation between the algebraic properties of the points of $X$ and 
the local geometry of $X$. This can be seen for instance in \cite{GP95} where the 
authors proved, among other results, that the vanishing of the third Hochschild 
cohomology space $\mathsf{H}^3(A,A)$ of an algebra $A$ implies that it is a smooth point 
of $X$. In the same direction, if $\mathsf{H}^2(A,A)=0$, then its orbit is open 
\cite{Ga74}.

Given an algebra $A$ in $X$, we denote its $\GL(V)$-orbit by $\orb 
A$ and its Zariski closure by $\clorb A$. The set $\clorb A$ is $\GL(V)$-invariant, thus 
it is a union of orbits. An algebra $B$ contained in $\clorb A$ is called a 
\textit{degeneration of $A$}.
Given the close connection between the local geometry of $X$ and the 
algebraic properties of its points, algebras and their degenerations are strongly 
related. For example, given $i\geq 0$, the map $A\mapsto 
\dim_\KK \mathsf{H}^i(A,A)$ is upper semicontinuous \cite{GP95}. As a consequence, if $B$ 
is an algebra contained in $\clorb A$ such that $\mathsf{H}^i(B,B)$ vanishes, then so 
does $\mathsf{H}^i(A,A)$. Therefore the Hochschild cohomology dimension of $A$ -- as 
defined in \cite{Ha06} -- bounds that of its degenerations. See  
also \cite{Ge95}, where the author proved that the property of being of tame 
representation type is preserved under the degeneration relation and used it to determine 
the representation type of several algebras.

On the other hand, given an associative 
algebra structure $A$ on $V$, a \textit{formal deformation}, or \textit{a one-parameter 
family of deformations}, of $A$ is a $\fser$-algebra $\defo X$, free as 
$\fser$-module, whose specialization at $0$ is $A$. The formal deformation theory -- 
that is, the study of the formal deformations -- was initiated by Gerstenhaber in 
\cite{Ge64}. He proved that it is a natural setting where the Hochschild cohomology 
spaces have different interpretations. For instance, the obstructions for a 
formal deformation to exist lie on the third cohomology space, and the vanishing of the 
second cohomology space implies that its formal deformations are all isomorphic to the 
algebra $A\llbracket t \rrbracket$ of formal series with coefficients in $A$. In the 
latter case the algebra $A$ is called \textit{analytically rigid}.
Despite the clear geometric motivation for this theory, it has a formal point of view 
and there is no requirement for the formal deformations to be germs of analytic curves.

There is a connection between the formal and the geometric deformation theories. Namely, 
if an algebra $A$ is analytically rigid, then $\orb A$ is open. The converse holds if the 
characteristic of $\KK$ is zero and $\mathsf{H}^3(A,A)=0$. See \cite{GP95} for more 
details. It is the purpose of this paper to contribute to the understanding of the 
relation between these theories in a way we shall now explain.

In \cite{GO88}, Grunewald and O'Halloran proved a general result which for the case 
of associative algebras implies that $B$ is a degeneration of $A$ if and only if there 
exists an affine curve $C$ in $X$ passing through $B$ and generically contained in $\orb 
A$. Suppose that $\orb B\subseteq \clorb A$ and denote by $R$ the stalk of $C$ at $B$, by 
$\mathfrak m$ the maximal ideal of $R$ and by $L$ its fraction field. The two conditions 
that $C$ satisfies imply that there is an $R$-algebra $\defo X$, free of rank $n$, 
such that $\defo X\otimes_R L$ is isomorphic to $A\otimes_\KK L$ as $L$-algebras and such 
that $\defo X\otimes_R R/\mathfrak m$ is isomorphic to $B$ as $\KK$-algebras. Our 
starting point is the following observation: the completion of $R$ with respect to the 
$\mathfrak m$-adic topology is isomorphic to $\fser$, the algebra of formal series, and 
the completion $\widehat{\defo 
X}$ of $\defo X$ is thus a $\fser$-algebra, free of rank $n$, whose specialization at $0$ 
is $B$. That is, $\widehat{\defo X}$ is a Gerstenhaber formal deformation of $B$. 
Moreover, $\widehat{\defo X}\otimes_{\fser}\lser$ is isomorphic to $A\otimes_\KK\lser$ as 
$\lser$-algebras, where $\lser$ denotes the field of Laurent series. Our main result 
states that the 
existence of such a Gerstenhaber formal deformation is also a sufficient condition for 
the 
inclusion $\orb B \subseteq \clorb A$ to hold.

\begin{theorem*}
Let $X$ be the variety of associative algebra structures on a finite dimensional vector 
space, and let $A$ and $B$ be associative $\KK$-algebras regarded as points on $X$.
The orbit $\orb B$ is contained in $\clorb A$ if and only if there 
exists a Gerstenhaber formal deformation $\defo X$ of $B$ such that 
$\extesc{\defo 
X}{\fser}$ is isomorphic to $\extesc{A}{\KK}$ as $\KK(\!(t)\!)$-algebras. 
\end{theorem*}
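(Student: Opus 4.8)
The plan is to reformulate the statement via structure constants and the tautological family over $X$, and then to prove the two implications separately, the reverse one being a short argument about closed sets and the direct one resting on the theorem of Grunewald and O'Halloran recalled above. Fix a basis of $V$, put $n=\dim_\KK V$, view $X$ as a closed subvariety of $\mathbb{A}^{n^{3}}$ through structure constants, and let $\varphi_A,\varphi_B\in X(\KK)$ be the points of $A$ and $B$. The dictionary I would set up is: after choosing a $\fser$-basis, a Gerstenhaber formal deformation $\defo X$ of $B$ is the same datum as a point $\varphi\in X(\fser)$, that is a morphism $\varphi\colon\operatorname{Spec}\fser\to X$, whose reduction modulo $t$ lies in $\orb B$; moreover $\extesc{\defo X}{\fser}$ corresponds to $\varphi$ regarded in $X(\lser)$, so the condition that $\extesc{\defo X}{\fser}\cong\extesc A{\KK}$ as $\lser$-algebras says exactly that $\varphi$ and $\varphi_A$ lie in a common $\GL(V)(\lser)$-orbit. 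I will also use that $\clorb A$ is a $\GL(V)$-stable \emph{closed} subscheme of $X$ that contains $\varphi_A$.

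For ``$\Leftarrow$'' I would argue topologically. Given such a $\varphi$, pick $g\in\GL(V)(\lser)$ with $\varphi=g\cdot\varphi_A$ in $X(\lser)$; since $\varphi_A\in\clorb A(\KK)\subseteq\clorb A(\lser)$ and $\clorb A$ is $\GL(V)$-stable, we get $\varphi\in\clorb A(\lser)$, i.e.\ the generic fibre $\operatorname{Spec}\lser\to X$ of $\varphi$ factors through the closed subscheme $\clorb A$. Hence the image $x\in X$ of the generic point of $\operatorname{Spec}\fser$ lies in $\clorb A$, and so does $\overline{\{x\}}$. Because the closed point of $\operatorname{Spec}\fser$ lies in the closure of its generic point and $\varphi$ is continuous, the image of the closed point --- namely $\varphi\bmod t$, which lies in $\orb B$ --- lies in $\overline{\{x\}}\subseteq\clorb A$. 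Thus $\orb B$ meets $\clorb A$, and $\GL(V)$-stability of the latter yields $\orb B\subseteq\clorb A$.

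For ``$\Rightarrow$'' I would start from $\orb B\subseteq\clorb A$ and invoke the theorem of Grunewald and O'Halloran \cite{GO88} to obtain an affine curve in $X$ through $\varphi_B$ whose generic point lies in $\orb A$. Passing to the normalization of that curve and to a point above $\varphi_B$ gives a discrete valuation ring $R$ with residue field $\KK$ and fraction field $L$, together with a morphism $\operatorname{Spec}R\to X$ carrying the closed point to $\varphi_B$ and the generic point into $\orb A$. Pulling back the tautological family yields an $R$-algebra $\defo X_R$, free of rank $n$, with $\defo X_R\otimes_R\KK\cong B$ and with generic fibre an $L$-form of $A$. After a finite extension $L'/L$ trivializing that form and a discrete valuation ring $R'\subseteq L'$ dominating $R$ (necessarily with residue field $\KK$), Cohen's structure theorem identifies $\widehat{R'}$ with $\fser$, and I would set $\defo X:=(\defo X_R\otimes_R R')\otimes_{R'}\widehat{R'}$: a $\fser$-algebra, free of rank $n$, with specialization at $0$ isomorphic to $B$, hence a Gerstenhaber formal deformation of $B$, and since $\operatorname{Frac}(\widehat{R'})=\lser$ one has
\[
\extesc{\defo X}{\fser}=\defo X\otimes_{\fser}\lser\;\cong\;(\defo X_R\otimes_R L')\otimes_{L'}\lser\;\cong\;A\otimes_\KK\lser=\extesc A{\KK}
\]
as $\lser$-algebras, which is what is wanted.

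The reverse implication is essentially formal once the dictionary is fixed, and the real input in the direct one is the theorem of Grunewald and O'Halloran, which I may quote. The step I expect to require the most care is the bookkeeping around the finite extension $L'$: a priori the generic fibre of the family over the curve is only a twisted form of $A$, not $A\otimes_\KK L$ itself, but since one completes and applies Cohen's theorem immediately afterwards, the enlarged coefficient field $\operatorname{Frac}(\widehat{R'})$ is again abstractly $\lser$, so nothing in the statement is lost. What makes the theorem true is that membership of $\varphi_B$ in the \emph{closed} set $\clorb A$ is already detected by a single formal arc whose generic fibre is genuinely carried into the orbit over the Laurent field --- no algebraicity or convergence of the formal deformation is ever needed.
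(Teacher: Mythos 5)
Your proposal is correct, and in the direction that costs the paper the most work it follows a genuinely different route. For the implication ``degeneration $\Rightarrow$ formal deformation'' you do essentially what the paper does: invoke Grunewald--O'Halloran, pass to a discrete valuation ring with residue field $\KK$, and complete to land in $\fser$; note only that the full statement of Theorem \ref{teo:GO} already supplies $g\in\gsch(L)$ making the generic fibre isomorphic to $A\otimes_\KK L$ over $L$ itself, so your finite extension $L'$ trivializing the form is a harmless but unnecessary detour. The converse is where you diverge: the paper does not argue topologically, but reduces to the Grunewald--O'Halloran criterion by an explicit finite-generation argument --- it forms the subalgebra of $\fser$ generated by $t$, the structure constants of $\defo X$ and the $t$-normalized entries of $g$ and $g^{-1}$, and then uses Noether normalization, going-down, and passage to the integral closure to manufacture a DVR whose fraction field is finitely generated of transcendence degree $1$ over $\KK$, to which Theorem \ref{teo:GO} applies. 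Your argument replaces all of this by the observation that $\clorb A$ is a closed $\GL(V)$-stable subvariety, so the generic point of $\mathsf{Spec}\,\fser$ lands in it and continuity forces the image of the closed point (a point of $\orb B$) into it as well; equivalently, any polynomial vanishing on $\orb A$ vanishes on $g\cdot\varphi_A$ (a regular function vanishing on all $\KK$-points of $\GL(V)$ is zero), hence on the structure constants of $\defo X$, hence at $t=0$. This is shorter, avoids \cite{GO88} entirely in that direction, and works verbatim for the general pair $(X,G)$ of Theorem \ref{teo:main}, hence also for the graded and Lie variants. What the paper's longer route buys is strictly more than the orbit-closure inclusion: it converts the formal arc back into Grunewald--O'Halloran data, i.e.\ an algebraic one-parameter family over a DVR with finitely generated fraction field, which is of independent interest; your argument establishes only the degeneration itself, which is all the stated theorem requires.
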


Similar theorems hold for the varieties of finite dimensional graded associative 
algebras and finite dimensional Lie algebras. See Theorems \ref{teo:caso_as}, 
\ref{teo:caso_gras} and \ref{teo:casoLie}. These results are consequences of our Theorem 
\ref{teo:main}, which is a variant of the description of orbit 
closure given by Grunewald and O'Halloran.

From this point of view we study the family of $N$-Koszul algebras 
\cite{Be01}. We prove that the property of being $N$-Koszul is preserved under the 
degeneration relation, meaning that in case $\orb B \subseteq \clorb A$, if $B$ is 
$N$-Koszul then $A$ is $N$-Koszul. The case $N=2$ was already known and it is a corollary 
of a result of Drinfeld with important consequences in algebraic combinatorics 
\cite{SY97}. See Proposition \ref{prop:koszul} for more details. 
On the Lie algebra setting we show how to use our results to describe the sets of the 
form $\clorb{\mathfrak g}$ for $3$-dimensional Lie algebras using homological algebra 
tools. This description was already obtained by Agaoka in \cite{Aga99} by geometric means.

\subsection*{Acknowledgments} The author is deeply indebted to Andrea Solotar for her 
help in improving this article.

\section{General theorem}
First we recall the characterization of orbit closure of 
Grunewald and O'Halloran in the cases of our interest. As in \cite{GO88}, we use the term 
\textit{affine variety} for the -- non necessarily irreducible -- zero locus of an ideal 
of polynomials.

Let $\KK$ be an algebraically closed field, let $X$ be an affine 
variety over $\KK$, and let $G$ be an affine algebraic group acting on 
$X$. Define $\func$ and $\gsch$ to be the functors from the category of 
associative and commutative $\KK$-algebras to the category of sets given by 
$\func(A)=\Hom(\mathsf{Spec}(A),X)$ and $\gsch(A)=\Hom(\mathsf{Spec}(A),G)$ respectively.
Given $x\in\func(\KK)$, denote by $\orb x$ its orbit in $\func(\KK)$ under the action of 
$\gsch(\KK)$, and by $\clorb x$ its Zariski closure.

\begin{theorem}[\cite{GO88}]
\label{teo:GO}
 Let $x,y\in\func(\KK)$. The orbit $\orb y$ is contained in $\clorb x$ if and only if 
there exist a discrete valuation $\KK$-algebra $R$ and an element
$\defo X\in\func(R)$ such that the residue field of $R$ is $\KK$, its quotient 
field $L$ is finitely generated and of transcendence degree $1$ over $\KK$, and
\begin{align*}
 &\func(\inc[R])(\defo X) = g\cdot(\func(\inc[R]\circ\unit[R])(x)), \hskip0.5cm \text{ 
for some }g\in\gsch(L)\text{,}\\
 &\func(\proj[R])(\defo X) = y,
\end{align*}
where $\inc[R]:R\to L$ is the inclusion map, $\unit[R]:\KK\to R$ is the unit, and 
$\proj[R]:R\to\KK$ is the canonical projection.
\end{theorem}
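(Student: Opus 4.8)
The plan is to prove both implications by reading the elements of $\func$ and $\gsch$ as scheme morphisms and exploiting the geometry of the orbit map $\omega\colon G\to X$, $g\mapsto g\cdot x$, whose set-theoretic image is $\orb x$ and whose closure is $\clorb x$. Throughout I use that $\orb x$ is locally closed, hence open and dense in $\clorb x$, and that $\clorb x$ is closed and $G$-stable, so that $y\in\clorb x$ is equivalent to $\orb y\subseteq\clorb x$.

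For the implication from the existence of $(R,\defo X,g)$ to $\orb y\subseteq\clorb x$, I would view $\defo X\in\func(R)$ as a morphism $\phi\colon\mathsf{Spec}(R)\to X$. Since $R$ is a discrete valuation $\KK$-algebra, $\mathsf{Spec}(R)$ is irreducible with generic point $\eta$ (residue field $L$) and closed point $s$ (residue field $\KK$). The second hypothesis says $\phi(s)=y$, and the first says that the generic fibre $\func(\inc[R])(\defo X)$ equals $g\cdot x_{L}$, where $x_{L}=\func(\inc[R]\circ\unit[R])(x)$ is the base change of $x$ to $L$. Now $g\cdot x_{L}$ is the image of the $L$-point $g\in\gsch(L)$ under the orbit map, so the associated morphism $\mathsf{Spec}(L)\to X$ factors through $\orb x$, hence through the closed set $\clorb x$; thus $\phi(\eta)\in\clorb x$. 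As $\eta$ is dense in $\mathsf{Spec}(R)$ and $\clorb x$ is closed, $\phi(\mathsf{Spec}(R))\subseteq\overline{\{\phi(\eta)\}}\subseteq\clorb x$, whence $y=\phi(s)\in\clorb x$ and $\orb y\subseteq\clorb x$.

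For the converse, assume $y\in\clorb x$. The geometric input is a curve selection argument: choosing an irreducible component $Z$ of $\clorb x$ that contains $y$ and in which $\orb x$ is dense (such a component is the closure of one of the finitely many $G^0$-orbits making up $\orb x$), and cutting $Z$ by generic hyperplanes through $y$ and normalizing, I obtain a smooth affine curve $C$, a closed point $c_{0}\in C$, and a morphism $f\colon C\to\clorb x$ with $f(c_{0})=y$ and with $f$ carrying the generic point of $C$ into $\orb x$. Writing $L_{0}=\KK(C)$, the generic fibre of $f$ is then an $L_{0}$-point $\sigma$ of $\orb x$. To realise $\sigma$ as $g\cdot x$, I would use that $\omega\colon G\to\orb x$ is faithfully flat with fibres torsors under the stabilizer of $x$; pulling this torsor back along $\sigma$ yields a nonempty scheme of finite type over $L_{0}$, which therefore admits a closed point with residue field $L$ finite over $L_{0}$. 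Replacing $C$ by its normalization $C'$ in $L$ (a finite cover, so that $L=\KK(C')$ is again finitely generated of transcendence degree $1$ over $\KK$) and $c_{0}$ by a point $c_{0}'$ above it, I obtain a lift $g\in\gsch(L)$ whose image $g\cdot x_{L}$ equals the generic fibre of the induced morphism $C'\to X$.

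Finally I would take $R=\mathcal{O}_{C',c_{0}'}$, the local ring of $C'$ at $c_{0}'$: this is a discrete valuation ring whose residue field is $\KK$ (as $c_{0}'$ is a closed point of a variety over the algebraically closed field $\KK$) and whose quotient field is $L$. The composite $\mathsf{Spec}(R)\to C'\to X$ defines $\defo X\in\func(R)$; its special fibre is $y$, giving $\func(\proj[R])(\defo X)=y$, and its generic fibre is $g\cdot x_{L}=g\cdot\func(\inc[R]\circ\unit[R])(x)$, giving $\func(\inc[R])(\defo X)=g\cdot\func(\inc[R]\circ\unit[R])(x)$. I expect the main obstacle to lie in this forward direction, and more precisely in the passage from a curve lying generically in $\orb x$ to a genuine group element $g\in\gsch(L)$: one must control the finite field extension needed to trivialize the stabilizer torsor while keeping the transcendence degree equal to $1$ and the residue field equal to $\KK$.
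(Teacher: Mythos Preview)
The paper does not give its own proof of this statement: Theorem~\ref{teo:GO} is quoted verbatim from \cite{GO88} and used as a black box in the proof of Theorem~\ref{teo:main}. So there is no in-paper argument to compare your proposal against.

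That said, your sketch is essentially the argument of Grunewald and O'Halloran. The backward implication (specialization of $\defo X$ along $\mathsf{Spec}(R)$ forces $y\in\clorb x$) is correct as written. For the forward implication your outline---curve selection inside an irreducible component of $\clorb x$ meeting $\orb x$, then a finite extension of the function field to trivialize the stabilizer torsor and lift the generic point to $\gsch(L)$, then normalization and localization at a preimage of $y$---is the standard route and is correct in spirit. Two small points you should tighten if you write this up: first, the assertion that $\omega\colon G\to\orb x$ is ``faithfully flat with fibres torsors under the stabilizer'' can fail in positive characteristic when the stabilizer is non-reduced, but you only use that the fibre over the $L_0$-point $\sigma$ is a nonempty scheme of finite type over $L_0$, which follows from surjectivity alone; second, in the curve selection step you should say explicitly why, after each hyperplane cut through $y$, the point $y$ still lies in the closure of the intersection with $\orb x$ (this uses that $\orb x$ is open in $\clorb x$, so its complement has strictly smaller dimension and a generic hyperplane section through $y$ is not contained in it). With these caveats your proposal is sound.
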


Denote by $\fser$ the algebra of formal series and by $\lser$ its fraction field; the 
field of Laurent series. Our main theorem is the following variation of Theorem 
\ref{teo:GO}.

\begin{theorem}
\label{teo:main}
Let $x,y\in\func(\KK)$. The orbit $\orb y$ is contained in $\clorb x$ if and only if 
there exists an element $\defo X\in\func(\fser)$ such that the following equalities 
hold.
\begin{align}
\label{eqteo1}
&\func(\inc)(\defo X) = g\cdot(\func(\inc\circ\unit)(x)), \hskip0.5cm \text{ for some 
}g\in\gsch(\lser)\text{,}\\
\label{eqteo2}
 &\func(\proj)(\defo X) = y,
\end{align}
where $\inc:\fser\to\lser$ is the inclusion map, $\unit:\KK\to\fser$ is the unit map, and 
$\proj:\fser\to\KK$ is the canonical projection.
\end{theorem}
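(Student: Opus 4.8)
The plan is to deduce Theorem \ref{teo:main} from Theorem \ref{teo:GO} by comparing a general discrete valuation $\KK$-algebra $R$ with residue field $\KK$ and finitely generated quotient field of transcendence degree $1$, against the specific ring $\fser$. One direction is almost immediate: if $\defo X\in\func(\fser)$ satisfies \eqref{eqteo1} and \eqref{eqteo2}, then $\fser$ is itself a discrete valuation $\KK$-algebra with residue field $\KK$, but its fraction field $\lser$ is \emph{not} finitely generated over $\KK$, so Theorem \ref{teo:GO} does not apply directly in that direction. Hence the real content is to show that data over $\fser$ can be replaced by data over a suitable $R$ of the kind required in Theorem \ref{teo:GO}, and conversely.

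For the ``only if'' direction I would start from the $R$, $L$ and $\defo X\in\func(R)$ produced by Theorem \ref{teo:GO}. Since $R$ is a discrete valuation ring with residue field $\KK$ and $\KK$ is algebraically closed (hence the residue field extension is trivial), the completion $\widehat R$ of $R$ at its maximal ideal is isomorphic to $\fser$ by Cohen's structure theorem; let $\widehat{\defo X}\in\func(\fser)$ be the image of $\defo X$ under $\func$ applied to $R\to\widehat R\cong\fser$. The projection condition $\func(\proj)(\widehat{\defo X})=y$ is preserved because $R\to\widehat R$ is compatible with the residue maps. For \eqref{eqteo1}, note that $\func(\inc[R])(\defo X)$ and $g\cdot\func(\inc[R]\circ\unit[R])(x)$ agree in $\func(L)$; applying the ring map $L\to\widehat R\otimes_R L\hookrightarrow \lser$ (the fraction field of $\widehat R$) transports this equality to $\func(\inc)(\widehat{\defo X}) = g'\cdot\func(\inc\circ\unit)(x)$ with $g'\in\gsch(\lser)$ the image of $g$. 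The only subtlety is that $X$ and $G$ being affine $\KK$-schemes of finite type, $\func$ and $\gsch$ commute with the relevant base changes and in particular the $R$-point $\defo X$, being given by finitely many elements of $R$, really does map to a well-defined $\fser$-point; this is routine.

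For the ``if'' direction, the task is, given $\defo X\in\func(\fser)$ satisfying \eqref{eqteo1}–\eqref{eqteo2}, to manufacture an honest algebraic curve. Since $X\subseteq \mathbb A^m$ is cut out by polynomials, $\defo X$ is a tuple of formal power series $f_1(t),\dots,f_m(t)\in\fser$; the element $g\in\gsch(\lser)$ is given by finitely many Laurent series, and the conditions \eqref{eqteo1}–\eqref{eqteo2} are polynomial identities among these series and the coordinates of $x$ and $y$. The key tool is Artin approximation (or already the elementary fact that a formal solution of a finite system of polynomial equations over $\KK[t]$ can be approximated to any finite order by an algebraic one, combined with a Noetherianity/constructibility argument): one replaces the $f_i$ and the entries of $g$ by elements algebraic over $\KK(t)$ — equivalently, by rational functions after a finite extension $\KK(C)$ of $\KK(t)$ corresponding to an affine curve $C$ — agreeing with the formal data to high enough order that the defining equations of $X$, the orbit equation \eqref{eqteo1}, and the specialization \eqref{eqteo2} still hold. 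Taking $R$ to be the local ring of $C$ at the point over $t=0$ (a discrete valuation $\KK$-algebra with residue field $\KK$ and function field $L=\KK(C)$ finitely generated of transcendence degree $1$), the approximated data defines $\defo X'\in\func(R)$ satisfying exactly the hypotheses of Theorem \ref{teo:GO}, whence $\orb y\subseteq\clorb x$.

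The main obstacle is this last approximation step: one must ensure that \emph{all three} conditions — membership in $X$, the generic $G$-equivalence to $x$ over the Laurent field, and the specialization to $y$ at $0$ — survive the passage from formal to algebraic solutions simultaneously, and that $g$, which a priori has a pole at $t=0$, is approximated by something with the same pole behaviour so that the generic orbit condition is not lost while the special fibre stays at $y$. I expect this to be handled by writing the orbit condition in the form $\func(\inc)(\defo X)=\func(g)\cdot(\cdots)$ as an equality in $\func(\lser)$, clearing denominators in $t$ to get polynomial equations over $\fser$, and then invoking Artin approximation with the constraint that the approximating solution is congruent to the formal one modulo a high power of $t$ — this pins down both the value at $0$ and, after restoring denominators, the generic behaviour. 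Everything else (Cohen's theorem, compatibility of $\func$ with completion and localization, functoriality) is standard and I would treat it briefly.
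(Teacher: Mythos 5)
Your ``only if'' direction coincides with the paper's: pass from the ring $R$ of Theorem \ref{teo:GO} to its completion, identified with $\fser$, and push the data forward. For the ``if'' direction, however, you take a genuinely different route. The paper never approximates the formal data by algebraic functions: it simply takes the subalgebra $A\subseteq\fser$ generated by $t$, the coordinates $c_j$ of $\defo X$, and the $t$-power normalizations of the coordinates of $g$ and $g^{-1}$, notes that all the relevant identities already hold over $A$ and $A[t^{-1}]$, and then produces the required discrete valuation ring by pure commutative algebra: Noether normalization of $A$, a going-down argument to find a curve (a one-dimensional quotient $S=A/\mathfrak q'$) through the closed point given by evaluation at $0$ on which $t$ does not vanish, followed by normalization of $S$ and localization at a maximal ideal over $\mathfrak qS$ to get a DVR $R$ with residue field $\KK$ and function field finitely generated of transcendence degree $1$; Theorem \ref{teo:GO} then applies. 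Your route instead clears denominators and invokes Artin approximation (or Greenberg's theorem) to replace the formal solution by one algebraic over $\KK(t)$, congruent to it modulo a high power of $t$, and takes $R$ to be a local ring of the corresponding curve. This works and is arguably shorter once the approximation theorem is cited, but note two points: the approximation statement you use is not an ``elementary fact'' --- it is precisely Artin's theorem (or Greenberg's), a substantially heavier input than anything in the paper's argument, which is self-contained modulo Noether normalization, going-down and finiteness of integral closure; and the local ring of $C$ at the point over $t=0$ need not be a DVR unless you first pass to the normalization of $C$ (and choose the point compatibly with the embedding of your algebraic power series into $\fser$), a step you should make explicit since Theorem \ref{teo:GO} requires a discrete valuation $\KK$-algebra. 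With those two repairs your plan yields a correct, alternative proof; what the paper's argument buys is independence from approximation theorems, at the cost of the slicing argument needed because the subalgebra $A$ may have transcendence degree larger than $1$.
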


\begin{proof}
Suppose $\orb y\subseteq \clorb x$. Let $R$ and $\defo X$ be as in Theorem 
\ref{teo:GO}. Let $\mathfrak m$ be the maximal ideal of 
$R$. The completion of $R$ with respect to the $\mathfrak m$-adic topology 
is isomorphic to $\fser$ \cite{Ser79}*{Chapter II, Theorem 2}. Let $\varphi:R\to\fser$ be 
the inclusion map and let $\defo X'\colonequals \func(\varphi)(\defo X)$. A 
diagram chase argument shows that the element $\defo X'$ satisfies Equations 
\eqref{eqteo1} and \eqref{eqteo2}, thereby proving the \textit{only if} part of the 
statement.

Suppose there exist elements $\defo X$ in $\func(\fser)$ and $g$ in $\gsch(\lser)$ 
satisfying Equations \eqref{eqteo1} and \eqref{eqteo2}. Let $B_1$ and $B_2$ be the 
global sections of $X$ and $G$ respectively. For $i=1,2$ let $n_i\in\NN$ and 
$\pi_i:\KK[X_1,\dots,X_{n_i}]\to B_i$ be the 
projections. Define $c_j = \defo X\circ\pi_1(X_j)$, for $j=1,\dots,n_1$, and 
$g_k=g\circ\pi_2(X_k)$ and $g^k=(g^{-1})\circ\pi_2(X_k)$, for $k=1,\dots,n_2$.

Let $v:\lser^\times\to\ZZ$ be the valuation map. Define $\corr 
f=t^{-v(f)}f$ for $f\in\lser^\times$, and $\corr 0=0$. Note that $\corr f$ belongs to 
$\fser$ for all $f\in\lser$ and also $\proj(\corr f)\neq0$ for all $f\neq0$. Let $A$ be 
the 
subalgebra of $\fser$ generated by $t$, $c_j$, $\corr{g_k}$ and $\corr{g^k}$, with 
$j=1,\dots, n_1$ and $k=1,\dots,n_2$. Also, let $\proj[A]$ be the restriction of 
$\proj$ to $A$.
Since $c_j$ belongs to $A$ for all $j$, there exists $\defo{X}_A\in\func(A)$ such that 
$\func(\inc[A])(\defo{X}_A)=\defo X$, where $\inc[A]:A\to\fser$ is the inclusion map. As 
a consequence 
\begin{align}
\label{eq1}
&\func(\proj[A])(\defo{X}_A)=\func(\proj)(\defo X)=y.
\end{align}
Similarly, since all the elements and operations underlying the equality 
$\func(\inc)(\defo X) = g\cdot(\func(\inc\circ\unit)(x))$ belong to $A[t^{-1}]$, there 
exists $g_A\in\gsch(A[t^{-1}])$ such that 
\begin{align}
\label{eq2}
&\func(\inc[A]')(\defo X_A)=g_A\cdot(\func(\inc[A]'\circ\unit[A])(x)),
\end{align}
where $\inc[A]':A\to A[t^{-1}]$ is the inclusion map and $\unit[A]:\KK\to A$ is the unit 
map.

By the Noether normalization Lemma there exist elements
$\beta_1,\dots,\beta_r\in A$, algebraically independent over $\KK$, such that $A$ is 
integral over $\KK[\beta_1,\dots,\beta_r]$. Let $\mathfrak q$ be the kernel of 
$\proj[A]$. The ideal $\mathfrak 
q$ is maximal and so $\mathfrak q\cap\KK[ 
\beta_1,\dots,\beta_r]$ is maximal as well. Since $\KK$ is algebraically 
closed, we obtain that $\mathfrak 
q\cap\KK[\beta_1,\dots,\beta_r]=(\beta_1-\lambda_1,\dots,\beta_r-\lambda_r)$ 
for some elements $\lambda_1,\dots,\lambda_r$ in $\KK$. We may assume 
without loss of generality that $\mathfrak q\cap\KK[ 
\beta_1,\dots,\beta_r]=(\beta_1,\dots,\beta_r)$.

For every affine line $L\subset\KK^r$ with $0\in L$, denote by ${\mathfrak 
p}_L\subseteq(\beta_1,\dots,\beta_r)$ its prime ideal. By the 
going-down theorem there exists a prime ideal $\mathfrak q_L$ of $A$ such 
that ${\mathfrak q}_L\cap\KK[\beta_1,\dots,\beta_r]={\mathfrak p}_L$ and 
$\mathfrak q_L\subseteq \mathfrak q$. Using that $\KK^r$ is the union of all 
the lines passing through $0$, we obtain $\cap_{L}\mathfrak p_L=0$. Since $A$ 
is 
integral over $\KK[\beta_1,\dots,\beta_r]$, we deduce $\cap_L\mathfrak q_L=0$. 
As a consequence, there exists an affine line $L\subseteq \KK^r$ such that 
$t\notin\mathfrak q_L$ and $\mathfrak q_L\subseteq \mathfrak q$. Let $\mathfrak 
p\colonequals\mathfrak p_L$ and $\mathfrak q'\colonequals \mathfrak q_L$. The 
ring $\KK[\beta_1,\dots,\beta_r]/\mathfrak p$ is isomorphic to the polynomial 
ring in one variable $\KK[\beta]$.

Define $S=A/\mathfrak q'$. Observe that $S$ is integral over 
$\KK[\beta]$. The fact that $t$ does not belong to $\mathfrak q'$ implies that its class 
in $S$ does not vanish. The ideal $\mathfrak qS$ is maximal 
in $S$ and \[S/\mathfrak qS \cong A/\mathfrak q \cong \KK.\]
Let $\overline S$ be the 
integral closure of $S$ in its quotient field, which we denote by $L$, and let 
$\overline{\mathfrak q}$ be a maximal ideal in $\overline S$ 
such that $\overline{\mathfrak q} \cap S = \mathfrak q S$. The algebra $\overline S$ is 
an integrally closed domain of Krull dimension $1$. Moreover, by 
\cite{Ser00}*{Chapter III,Proposition 16}, it is a finitely 
generated $\KK$-algebra and so it is a Dedekind domain. Define $R=\overline 
S_{\overline{\mathfrak q}}$, the localization of $\overline S$ at $\overline{\mathfrak 
q}$. The quotient field of $R$ is also $L$, the quotient field of $S$, and therefore it 
is finitely generated and of transcendence degree $1$ over $\KK$. Let $\unit[R]$, 
$\proj[R]$ and $\inc[R]$ be respectively the unit map, the canonical projection onto the 
residue field and the inclusion map of $R$.

Since $\KK$ is algebraically closed and isomorphic to $S/\mathfrak qS$, we deduce 
that $\overline S/\overline{\mathfrak q}\overline S\cong S/\mathfrak qS$. It follows that
\[
 R/\overline{\mathfrak q}R
 \cong \overline{S}/\overline{\mathfrak q}\overline{S}
 \cong S/\mathfrak q S
 \cong A/\mathfrak q\cong \KK.
\] 
Consider the following commutative diagram
\[
 \xymatrix{
      & A/\mathfrak q \ar[dl]\ar[d]
      & A \ar[l]\ar[d]\ar[r] 
      & A[t^{-1}]\ar[d]\\
  \KK & S/\mathfrak qS \ar[l]\ar[d] 
      & S \ar[l]\ar[d]\ar[r] & S[t^{-1}]\ar[d]\\
      & R/\overline{\mathfrak q}R \ar[ul] & R \ar[l]\ar[r] & L
 }
\]
The maps $R\to\KK$ and $R\to L$ are respectively $\proj[R]$ and 
$\inc[R]$.
Let $\iota$ be the map from $A$ to $R$ and let $\iota'$ be the map from $A[t^{-1}]$ to 
$L$. Define
\begin{align*}
&\defo{X}'= \func(\iota)(\defo{X}_A), &\text{ and } & &g'=\gsch(\iota')(g_A).
\end{align*}
The commutativity of the above diagram and Equations \eqref{eq1} and 
\eqref{eq2} imply that 
\begin{align*}
&\func(\inc[R])(\defo{X}') = g'\cdot(\func(\inc[R]\circ\unit[R])(x)), & & 
\func(\proj[R])(\defo{X}') = y. 
\end{align*}	
From Theorem \ref{teo:GO} we deduce that $\orb y\subseteq \clorb x$.
\end{proof}

\section{Gerstenhaber formal deformations}
\label{sec:gerst}
Here we briefly recall the notion of Gerstenhaber formal deformations of associative 
and Lie algebras and some fundamental results that we will need in the
forthcoming sections. For a complete account on the subject we refer to the original 
paper \cite{Ge64}.

For a $\KK$-vector space $V$, denote by $V\llbracket t\rrbracket$ 
the space of formal series with coefficients in $V$.

\subsection*{Associative algebras} Let $A$ be an associative algebra over $\KK$ with 
underlying vector space $V$. A 
\textit{formal deformation} of $A$ is a pair $(\defo X,\Phi)$ where $\defo X$ is a 
complete and torsion-free $\fser$-algebra and $\Phi:\defo X\otimes_{\fser}\KK\to A$ is an 
isomorphism of $\KK$-algebras. 
Equivalently, a formal deformation of $A$ is a $\fser$-algebra
$\defo X$ with underlying space $V\llbracket t\rrbracket$, such that there exists a 
family 
of maps $F_i\in\Hom_\KK(V\otimes_\KK V,V)$ for all $i\geq1$ satisfying, for all $v,w\in 
V$,
\[
 v\cdot_{\defo X}w = v\cdot_Aw + F_1(v,w)t + F_2(v,w)t^2+\cdots,
\]
where $v\cdot_{\defo X} w$ and $v\cdot_A w$ denote respectively de products of $v$ and 
$w$ in $\defo X$ and in $A$. 
Two formal deformations $(\defo X,\Phi)$ and $(\defo 
X',\Phi')$ of $A$ are \textit{equivalent} if there is an isomorphism $f:\defo X\to\defo 
X'$ of $\fser$-algebras such that the diagram
\begin{align*}
 \xymatrix{
 \defo X\otimes_{\fser}\KK \ar[rr]^-{f\otimes\id} \ar[dr]^-{\Phi}
 & & \defo X'\otimes_{\fser}\KK \ar[ld]_-{\Phi'} \\
 & A &}
\end{align*}
commutes. A formal deformation is \textit{trivial} if it 
is equivalent to $(A\llbracket t\rrbracket,\mathsf{ev}_0)$, where $A\llbracket 
t\rrbracket$ is the algebra of formal series with coefficients in $A$, and 
$\mathsf{ev}_0$ is the map induced by the evaluation at $0$ map.

Suppose $A$ is unital. The Hochshild cohomology of $A$ with values in itself, denoted 
by $\mathrm{H}^\bullet(A,A)$, is the cohomology of the complex 
$\bigoplus_{i\geq0}\Hom_\KK(A^{\otimes i},A)$ with 
differential $d$ given by the formula
\begin{align*}
 d(f)(a_0\otimes \cdots\otimes a_i) &= a_0f(a_1\otimes\cdots\otimes a_{i}) + 
\sum_{j=0}^{i-1}(-1)^jf(a_0\otimes\cdots\otimes a_{j}a_{j+1}\otimes\cdots\otimes 
a_i)\\
&+(-1)^if(a_0\otimes\cdots\otimes a_{i-1})a_i,
\end{align*}
for all $f\in\Hom_\KK(A^{\otimes{i}},A)$ and all $i\geq0$.
The second Hochschild cohomology space $\mathrm{H}^2(A,A)$ describes the 
formal deformations up to first order. More precisely, for every 
nontrivial deformation $\defo X$ of $A$ there exist $n\geq1$ and a formal deformation 
$\defo X'$ of $A$ such that $\defo X$ is equivalent to $\defo X'$ and the family 
of maps $\{F'_i\}_{i\geq1}	$ of $\defo X'$ satisfy 
\begin{itemize}
 \item $F'_i=0$ for all $i\leq n-1$, and
 \item the map $F'_n$ is a $2$-cocycle and its class $[F'_n]$ in $\in\mathrm{H}^2(A,A)$ 
is not zero.
\end{itemize}
See \cite{Ge64}*{Section 3, Proposition 1}. Moreover, if $[G]=[F'_n]$ in 
$\mathrm{H}^2(A,A)$ for some map $G$, then there exists a 
formal deformation $\defo X''$ equivalent to $\defo X$ and whose family of maps $F''_i$ 
satisfy $F''_i=0$ for all $i\leq n-1$ and 
$F''_n = G$.
In particular, if $\mathrm{H}^2(A,A)$ is zero then every formal deformation of $A$ is 
trivial.

If $A$ is graded, a \textit{graded formal deformation} of $A$ is a formal deformation 
$(\defo X,\Psi)$ where $\defo X$ is a graded $\fser$-algebra and $\Psi$ is an 
isomorphism of degree zero. The grading of $A$ induces a grading on its Hochschild 
cohomology, and the space that controls the graded formal deformations is the 
homogeneous component of degree zero of $\mathrm{H}^2(A,A)$.

\subsection*{Lie algebras} The definition of formal deformation for Lie algebras is 
similar to the associative algebra case. Let $\mathfrak g$ be a 
Lie algebra over $\KK$ with underlying vector space $V$. A formal 
deformation of $\mathfrak g$ is a pair $(\defo X,\Psi)$, where $\defo X$ is a complete 
and torsion-free $\fser$-Lie algebra and $\Psi:\defo X\otimes_\fser\KK\to \mathfrak g$ is 
an isomorphism of Lie algebras. Equivalently, it
is a $\KK\llbracket t\rrbracket$-Lie algebra structure 
$\defo X$ on $V\llbracket t\rrbracket$ such that there exists a family of maps 
$F_i\in\Hom_\KK(V\wedge V,V)$ for all $i\geq1$ satisfying
\[
 [v,w]_{\defo X} = [v,w]_{\mathfrak g} + F_1(v,w)t + F_2(v,w)t^2+\cdots
\]
for all $v,w\in V$. In this setting, the space that controls  the formal deformations of 
a Lie algebra $\mathfrak g$ is the cohomology space $\mathrm{H}^2_{Lie}(\mathfrak 
g,\mathfrak g)$, which is the cohomology of the complex 
$\bigoplus_{i\geq0}\Hom_\KK(\Lambda^i\mathfrak g,\mathfrak g)$ where the differential is
\begin{align*}
 d(f)(v_0\wedge\cdots\wedge v_{i})&= 
\sum_{j=0}^{i}(-1)^{j}[v_j,f(v_1\wedge\cdots \wedge\hat{v}_j\wedge\cdots\wedge 
v_{i})] \\&\hskip0.2cm+ 
\sum_{j<k}(-1)^{j+k}f([v_j,v_k]\wedge\cdots\wedge\hat{v}_j\wedge\cdots\wedge\hat{v}_k 
\wedge\cdots\wedge v_{i}),
\end{align*}
for all $f\in\Hom_\KK(\Lambda^{i}\mathfrak g,\mathfrak g)$ and all $i\geq0$.
If $\defo X$ is a nontrivial formal deformation of $\mathfrak g$, then 
it is equivalent to a formal deformation $\defo X'$ with family of maps $F'_i$ satisfying 
$F'_i=0$ for all $i\leq n-1$ and $[F'_n]\neq 0$ in $\mathrm{H}^2_{Lie}(\mathfrak 
g,\mathfrak g)$, for some $n\geq1$. As in the case of associative algebras, if 
$[G]=[F'_n]$, there exists a formal deformation $\defo X''$ equivalent to $\defo X$ and 
whose family of maps $F''_i$ satisfy $F''_i=0$ for all $i\leq n-1$ and $F''_n=G$.

\section{Degenerations of finite dimensional associative algebras}
\label{sec:degAss}

Let $V$ be a finite dimensional vector space over $\KK$, and let $\ass[V]{\KK}$ be 
the variety of associative algebra structures on $V$. More precisely, 
$\ass[V]{\KK}$ is the subset of $\Hom_\KK(V\otimes V, V)$ consisting of the elements 
$\varphi$ such that 
\[\varphi\circ(\varphi\otimes\id) = \varphi\circ(\id\otimes\varphi)\]
as morphisms from $V\otimes V\otimes V$ to $V$. The algebraic group 
of linear isomorphisms $\GL(V)$ acts on $\ass[V]{\KK}$ via de formula $g\cdot \varphi 
= g\circ\varphi\circ(g^{-1}\otimes g^{-1})$ for $g\in\GL(V)$ and 
$\varphi\in\ass[V]{\KK}$. The orbits are in natural 
bijection with the isomorphism classes of associative algebras of dimension $\dim_\KK V$.
For such an algebra $A$ denote by $\orb A$ its corresponding 
orbit in $\ass[V]{\KK}$ and by $\clorb A$ its Zariski closure. Let $A$ and $B$ be 
$\KK$-algebras in $\ass[V]{\KK}$.  The algebra $B$ is said to be a \textit{degeneration} 
of $A$ if $\orb B\subseteq\clorb A$.

\begin{theorem}
\label{teo:caso_as}Let $A$ and $B$ be associative $\KK$-algebras of the same dimension 
regarded as points of $\ass[V]{\KK}$. 
The algebra $B$ is a degeneration of $A$ if and only if there 
exists a Gerstenhaber formal deformation $\defo X$ of $B$ such that 
$\extesc{\defo X}{\fser}$ is isomorphic to $\extesc{A}{\KK}$ as algebras over 
$\KK(\!(t)\!)$. 

\end{theorem}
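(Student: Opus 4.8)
The plan is to deduce Theorem~\ref{teo:caso_as} from the general Theorem~\ref{teo:main} by taking $X=\ass[V]{\KK}$ and $G=\GL(V)$ and unwinding the functorial statement of the latter into the language of Gerstenhaber formal deformations. First I would observe that for a commutative $\KK$-algebra $R$, an element of $\func(R)=\Hom(\mathsf{Spec}(R),\ass[V]{\KK})$ is precisely an $R$-bilinear associative multiplication on $V\otimes_\KK R$, i.e.\ an associative $R$-algebra structure on the free $R$-module $V\otimes_\KK R$; likewise $\gsch(R)$ is the group of $R$-linear automorphisms of $V\otimes_\KK R$. Under this dictionary, $\func(\unit)(x)$ corresponds to the base-changed algebra $A\otimes_\KK\fser$, the map $\func(\proj)$ corresponds to specialization at $t=0$, and $\func(\inc)$ corresponds to $-\otimes_\fser\lser$. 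So an element $\defo X\in\func(\fser)$ satisfying \eqref{eqteo2} is exactly an associative $\fser$-algebra structure on $V\llbracket t\rrbracket$ specializing at $0$ to $B$, which is the same data as a Gerstenhaber formal deformation of $B$ (using the ``equivalently'' description recalled in Section~\ref{sec:gerst}: completeness and torsion-freeness are automatic since the underlying module is $V\llbracket t\rrbracket$, and the structure maps $F_i$ encode the $t$-adic expansion of the multiplication).

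Next I would translate Equation~\eqref{eqteo1}. It asserts the existence of $g\in\gsch(\lser)$ with $\func(\inc)(\defo X)=g\cdot\func(\inc\circ\unit)(x)$. On the left is the $\lser$-algebra $\extesc{\defo X}{\fser}$; on the right, since the $\GL$-action on $\ass$ is exactly the transport-of-structure action $g\cdot\varphi=g\circ\varphi\circ(g^{-1}\otimes g^{-1})$, the element $g\cdot\func(\inc\circ\unit)(x)$ is the $\lser$-algebra obtained from $\extesc{A}{\KK}$ by transporting its multiplication along the $\lser$-linear automorphism $g$ of $V\otimes_\KK\lser$. Two multiplications on the same module that differ by transport along a linear automorphism define \emph{isomorphic} algebras, and conversely any $\lser$-algebra isomorphism $\extesc{\defo X}{\fser}\xrightarrow{\sim}\extesc{A}{\KK}$ is a $\lser$-linear bijection of $V\otimes_\KK\lser$ intertwining the two products, hence provides such a $g$. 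Therefore Equation~\eqref{eqteo1} holds for some $g\in\gsch(\lser)$ if and only if $\extesc{\defo X}{\fser}\cong\extesc{A}{\KK}$ as $\lser$-algebras. Combining the two translations with Theorem~\ref{teo:main} gives the claimed equivalence.

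I would then also record the two compatibility checks that make this clean: (i) that $\ass[V]{\KK}$ is indeed an affine variety in the sense of \cite{GO88} and $\GL(V)$ an affine algebraic group acting on it, which is immediate from the defining associativity equations being polynomial in the structure constants and the action being given by polynomial formulas (with $\GL(V)$ realized as a closed subvariety of $\mathrm{End}(V)\times\KK$ via $\det$); and (ii) that the notions of ``orbit'' and ``orbit closure'' used in Theorem~\ref{teo:main} for $\func(\KK)$ coincide with the geometric $\orb A$, $\clorb A$ in $\ass[V]{\KK}$, which holds because $\func(\KK)=\ass[V]{\KK}(\KK)$ and $\gsch(\KK)=\GL(V)$ as sets with the correct action.

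The step I expect to be the main obstacle is the precise bookkeeping in the ``equivalently'' reformulation of a Gerstenhaber formal deformation: one must check that an arbitrary associative $\fser$-algebra structure $\defo X\in\func(\fser)$ that specializes to $B$ — a priori only a complete, torsion-free $\fser$-algebra free of rank $n=\dim_\KK V$ — can, after choosing a basis, be written on $V\llbracket t\rrbracket$ with multiplication $v\cdot w = v\cdot_B w + \sum_{i\ge1}F_i(v,w)t^i$, i.e.\ that the two definitions of formal deformation recalled in Section~\ref{sec:gerst} agree and are both captured by $\func(\fser)$ together with \eqref{eqteo2}. This is routine but needs the observation that $\func(\fser)$ records multiplications on $V\otimes_\KK\fser\cong V\llbracket t\rrbracket$, so freeness and rank are built in, and that \eqref{eqteo2} forces the $t=0$ reduction to be $B$ on the nose (not merely up to isomorphism, which is harmless). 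The rest of the argument is a diagram chase through the base-change functors $\unit,\proj,\inc$, entirely parallel to what is needed for the graded and Lie cases of Theorems~\ref{teo:caso_gras} and \ref{teo:casoLie}.
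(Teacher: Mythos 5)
Your proposal is correct and follows essentially the same route as the paper: the paper's proof of Theorem \ref{teo:caso_as} consists precisely of specializing Theorem \ref{teo:main} to $(X,G)=(\ass[V]{\KK},\GL(V))$ and identifying $\func(\fser)$ with the $\fser$-algebra structures on $V\otimes_\KK\fser\cong V\llbracket t\rrbracket$, the dictionary you spell out in more detail (the translation of Equation \eqref{eqteo1} into an $\lser$-algebra isomorphism via the transport-of-structure action, and of Equation \eqref{eqteo2} into the specialization at $t=0$ being $B$). The paper simply leaves these verifications as ``follows easily,'' so your extra bookkeeping is a faithful expansion rather than a different argument.
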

\begin{proof}
 Let $(X,G)=(\ass[V]{\KK},\GL(V))$. Let $\func$ be the functor of Theorem \ref{teo:main}. 
Note that $\func(\fser) = 
\Hom(\mathsf{Spec}(\fser),X)$ is the set of associative $\fser$-algebra structures on 
$V\otimes_\KK \fser$. Since $V$ is finite dimensional, the space $V\otimes_\KK \fser$ is 
equal to  $V\llbracket t\rrbracket$ . The result follows easily from Theorem 
\ref{teo:main} and the definition of Gerstenhaber 
formal deformation.
\end{proof}

There is an immediate consequence of Theorem \ref{teo:caso_as}. Let $A$ and $B$ be finite 
dimensional algebras. Write $B \leq A$ if there exists a formal deformation $\defo X$ of 
$B$ such that $\defo X\otimes_{\fser}\lser\cong A\otimes_\KK\lser$. By Theorem 
\ref{teo:caso_gras}, $B \leq A$ 
if and only if $B$ is a degeneration of $A$. The binary relation defined by the 
geometric degeneration relation is a partial order on the set of orbits. As a consequence 
we obtain the following result.

\begin{proposition}
 The binary relation $\leq$ is a partial order on the isomorphism classes of finite 
dimensional associative algebras.
\end{proposition}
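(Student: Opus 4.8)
The plan is to reduce everything to the transitivity and antisymmetry of the geometric degeneration relation, which are known, by exploiting Theorem \ref{teo:caso_as} (the statement seems to intend this reference rather than the mistyped \ref{teo:caso_gras}) to translate the relation $\leq$ into the geometric one. First I would recall that $B \leq A$ means there is a Gerstenhaber formal deformation $\defo X$ of $B$ with $\extesc{\defo X}{\fser}\cong\extesc{A}{\KK}$ as $\lser$-algebras, and that Theorem \ref{teo:caso_as} says precisely that this holds if and only if $\orb B\subseteq\clorb A$, i.e. if and only if $B$ is a degeneration of $A$. So $B\leq A$ if and only if $\orb B\subseteq\clorb A$.

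Next I would verify that the geometric degeneration relation is a partial order on the set of $\GL(V)$-orbits in $\ass[V]{\KK}$, equivalently on the isomorphism classes of $\dim_\KK V$-dimensional associative algebras. Reflexivity is immediate since $\orb A\subseteq\clorb A$. Transitivity follows because $\orb C\subseteq\clorb B$ and $\clorb B\subseteq\clorb A$ (the latter because $\orb B\subseteq\clorb A$ and $\clorb A$ is closed and $\GL(V)$-invariant, hence contains $\clorb B$) give $\orb C\subseteq\clorb A$. For antisymmetry, if $\orb B\subseteq\clorb A$ and $\orb A\subseteq\clorb B$, then $\clorb A=\clorb B$; since an orbit is open in its own closure (orbits of an algebraic group action are locally closed, so $\orb A$ is dense and open in $\clorb A$, and likewise $\orb B$ in $\clorb B=\clorb A$), two dense open subsets of the irreducible... more carefully, $\clorb A$ is a finite union of orbits, $\orb A$ is the unique orbit of maximal dimension that is dense, and the same holds for $\orb B$, forcing $\orb A=\orb B$. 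Hence $A\cong B$. A subtlety worth a sentence: when $\dim_\KK V$ is large, $\ass[V]{\KK}$ is reducible, so one should argue on each irreducible component, or simply use that $\clorb A$ is the closure of a single orbit and therefore its set-theoretic dimension is $\dim\orb A$, and that an orbit is open in its closure, so $\orb A\subseteq\clorb B=\clorb A$ with $\dim\orb A=\dim\clorb A$ forces $\orb A$ open and dense in $\clorb A$; comparing dimensions of $\orb A$ and $\orb B$ gives equality of orbits.

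Combining the two: $\leq$ coincides with the geometric degeneration relation under the orbit correspondence, and the latter is a partial order, so $\leq$ is a partial order on isomorphism classes of finite dimensional associative algebras. The expected main obstacle is not any hard step but rather making the passage through Theorem \ref{teo:caso_as} and the orbit--isomorphism-class dictionary precise, and stating the standard facts about orbits (local closedness, openness in the closure, the union-of-orbits structure of the closure) cleanly enough that antisymmetry is rigorous in the possibly reducible variety $\ass[V]{\KK}$; I would cite \cite{GO88} or \cite{Ga74} for these geometric facts rather than reprove them.
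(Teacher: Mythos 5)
Your proposal is correct and follows essentially the same route as the paper: translate $\leq$ into the geometric degeneration relation via Theorem \ref{teo:caso_as} (the paper's citation of Theorem \ref{teo:caso_gras} is indeed a typo) and then invoke that degeneration is a partial order on orbits. The only difference is that you spell out the standard orbit-closure facts (local closedness, dimension of the boundary) behind antisymmetry, which the paper simply takes as known.
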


We do not know whether there is a purely algebraic proof of this fact nor if there is a 
larger class of algebras than finite dimensional algebras where the relation $\leq$ 
extends and for which a similar result holds.

\medskip

For a finite dimensional graded vector space $V=\bigoplus_{i\in\ZZ}V_i$ over $\KK$, 
define $\gass[V]{\KK}$ to be the variety of graded algebra structures on $V$. The 
algebraic group $\gGL(V)$ of linear isomorphisms of degree $0$ acts on 
$\gass[V]{\KK}$ and its orbits are in correspondence with the isomorphism classes of 
finite dimensional graded algebras with Hilbert series $\sum_{i\in\ZZ}\dim_\KK(
V_i)\,t^i$. For two such algebras $A$ and $B$, we say that \textit{ $A$ is a graded 
degeneration of $B$} if $\orb A \subseteq \clorb B$ in $\gass[V]{\KK}$. The proof of 
Theorem \ref{teo:caso_as} can be easily adapted to prove the following theorem.

\begin{theorem}
\label{teo:caso_gras}Let 
$A$ and $B$ be graded associative $\KK$-algebras regarded as points of $\gass[V]{\KK}$. 
The algebra $B$ is a graded degeneration of $A$ if and only if there 
exists a graded Gerstenhaber formal deformation $\defo X$ of $B$ such that 
$\extesc{\defo 
X}{\fser}$ is isomorphic to $\extesc{A}{\KK}$ as graded algebras over $\KK(\!(t)\!)$. 
\end{theorem}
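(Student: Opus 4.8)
The plan is to imitate the proof of Theorem~\ref{teo:caso_as}, replacing the pair $(\ass[V]{\KK},\GL(V))$ everywhere by the pair $(\gass[V]{\KK},\gGL(V))$ and invoking the graded refinements of the functor $\func$ and of Gerstenhaber's deformation machinery. First I would set $(X,G)=(\gass[V]{\KK},\gGL(V))$ and let $\func$ be the corresponding functor from the category of commutative $\KK$-algebras to sets, $\func(R)=\Hom(\mathsf{Spec}(R),X)$, and likewise $\gsch(R)=\Hom(\mathsf{Spec}(R),G)$. The key point, exactly as in the associative case, is the identification $\func(\fser)=\Hom(\mathsf{Spec}(\fser),\gass[V]{\KK})$ with the set of \emph{graded} associative $\fser$-algebra structures on $V\otimes_\KK\fser$; since each homogeneous component $V_i$ is finite dimensional, $V\otimes_\KK\fser$ is degreewise equal to $V\llbracket t\rrbracket$ with its induced grading, so such a structure is precisely a graded Gerstenhaber formal deformation of the graded algebra $B$ one obtains by specializing at $0$. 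In the same way $\gsch(\lser)$ is the group of degree-zero $\lser$-linear automorphisms of $V\otimes_\KK\lser$, so the twisting condition $\func(\inc)(\defo X)=g\cdot(\func(\inc\circ\unit)(x))$ with $g\in\gsch(\lser)$ translates into an isomorphism $\extesc{\defo X}{\fser}\cong\extesc{A}{\KK}$ of graded $\lser$-algebras, and Equation~\eqref{eqteo2} says that the specialization of $\defo X$ at $0$ is the graded algebra $B$.

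With these dictionaries in place, the theorem is a direct application of Theorem~\ref{teo:main} to the pair $(\gass[V]{\KK},\gGL(V))$: the orbit $\orb B$ is contained in $\clorb A$ in $\gass[V]{\KK}$ if and only if there is $\defo X\in\func(\fser)$ and $g\in\gsch(\lser)$ satisfying Equations~\eqref{eqteo1} and~\eqref{eqteo2}, which by the translations above is exactly the existence of a graded Gerstenhaber formal deformation $\defo X$ of $B$ with $\extesc{\defo X}{\fser}\cong\extesc{A}{\KK}$ as graded $\lser$-algebras. The only genuine verification needed is that $\gass[V]{\KK}$ really is an affine variety and $\gGL(V)$ an affine algebraic group acting morphically on it, so that Theorem~\ref{teo:main} applies: this is routine, since $\gass[V]{\KK}$ is cut out inside $\Hom_\KK(V\otimes V,V)$ by the associativity equations together with the (linear) equations forcing the product to respect the grading, and $\gGL(V)=\prod_i\GL(V_i)$ is a closed subgroup of $\GL(V)$.

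I expect the main obstacle — such as it is — to be purely bookkeeping: one must check that passing to the completion and then back down (the two directions of the proof of Theorem~\ref{teo:main}) is compatible with the gradings, i.e.\ that the ring maps $\varphi\colon R\to\fser$, $\iota\colon A\to R$, etc.\ appearing there induce maps of graded objects. This is automatic because the grading lives on $V$, not on the parameter rings: all the base-change functors $\func(R)$ carry the grading of $V$ along, and a graded structure over one commutative ring pulls back to a graded structure over another along any $\KK$-algebra homomorphism. Hence no new ideas are required beyond those in the proof of Theorem~\ref{teo:caso_as}; the argument is the same mutatis mutandis, and I would simply record that the finite dimensionality of each $V_i$ is what guarantees $V\otimes_\KK\fser\cong V\llbracket t\rrbracket$ degreewise, which is the one place the hypotheses are used.
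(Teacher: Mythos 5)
Your proposal is correct and follows essentially the same route as the paper, which simply adapts the proof of Theorem \ref{teo:caso_as} by applying Theorem \ref{teo:main} to the pair $(\gass[V]{\KK},\gGL(V))$ and identifying graded $\fser$-algebra structures on $V\llbracket t\rrbracket$ with graded Gerstenhaber formal deformations. Your additional remarks (affineness of $\gass[V]{\KK}$ and $\gGL(V)$, compatibility of the base changes with the grading) are exactly the routine verifications the paper leaves implicit.
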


\medskip

Next we turn to $N$-Koszul algebras.
Let $A=\bigoplus_i A_i$ be a finite dimensional graded 
$\KK$-algebra with unit, generated as an algebra by $A_1$. In particular $A_0=\KK$ and 
$J=\bigoplus_{i\geq1}A_i$ is a maximal ideal. This induces an $A$-module structure on 
$\KK$ by identifying it with $A/J$.

Let $N\geq2$ and define the function $n:\NN\to\NN$ by $n(2i)=iN$ and $n(2i+1)=iN+1$. 
The grading of $A$ induces a grading on $\Tor^A_i(\KK,\KK)$ for all $i$, whose 
homogeneous component of degree $j$ we denote by $\Tor^A_{ij}(\KK,\KK)$.
The algebra $A$ is \textit{$N$-Koszul} if $\Tor^A_{ij}(\KK,\KK)$ is zero for 
all $i,j\geq0$ such that $j\neq n(i)$. These algebras were 
introduced by Berger in \cite{Be01}, generalising the classical case due to Priddy which 
corresponds to $N=2$ \cite{Pr70}. See also \cite{GMMVZ04} for a further generalisation to 
the non local case.

A result of Drinfeld \cite{Dr92} (see also \cite{PP05}*{Chapter 6}) has the following 
corollary.

\begin{corollary}
\label{coro:drinfeld}
Let $A$ and $B$ be finite dimensional graded $\KK$-algebras with unit such that $B$ is a 
graded degeneration of $A$. If $B$ is $2$-Koszul then $A$ is $2$-Koszul.
\end{corollary}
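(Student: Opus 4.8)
The plan is to move the problem to the generic fibre of a formal deformation and then either invoke Drinfeld's theorem or run a short semicontinuity argument. First, by Theorem~\ref{teo:caso_gras} there is a graded Gerstenhaber formal deformation $\defo X$ of $B$ with $\extesc{\defo X}{\fser}\cong\extesc{A}{\KK}$ as graded $\lser$-algebras. Since $B$ is $2$-Koszul it is connected, unital and generated in degree~$1$; because the underlying graded $\fser$-module of $\defo X$ is $\bigoplus_i V_i\llbracket t\rrbracket$, each $\defo X_i$ is free of finite rank over $\fser$ with $\defo X_i\otimes_\fser\KK\cong B_i$, and a Nakayama argument over the local ring $\fser$ (here $\dim_\KK B<\infty$ is used) shows that $\defo X$ is connected, unital and generated in degree~$1$ as well, and that these properties pass to $A$ through $\extesc{A}{\KK}\cong\extesc{\defo X}{\fser}$. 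By flat base change $\Tor^{\extesc{A}{\KK}}_{ij}(\lser,\lser)\cong\Tor^A_{ij}(\KK,\KK)\otimes_\KK\lser$, so it suffices to show that $\extesc{\defo X}{\fser}$ is $2$-Koszul over $\lser$; that is, that the generic fibre of $\defo X$ is Koszul given that its special fibre $B$ is.

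For this I would argue with the reduced bar complex $C_\bullet=\bar{\defo X}^{\otimes_\fser\bullet}$, where $\bar{\defo X}=\bigoplus_{i\geq1}\defo X_i$. In each internal degree $j$ its terms are finitely generated free $\fser$-modules, their ranks independent of the deformation parameter, and $C_\bullet$ is compatible with base change and computes $\Tor^{\defo X}_{\bullet j}(\fser,\fser)$. Writing $H_n$ for the $n$-th homology of $(C_\bullet)_j$, the universal coefficient theorem over the discrete valuation ring $\fser$ applied to the base changes $-\otimes_\fser\lser$ and $-\otimes_\fser\KK$ gives
\[
\dim_\lser\Tor^{\extesc{\defo X}{\fser}}_{nj}(\lser,\lser)=\mathrm{rank}_\fser H_n\leq\dim_\KK(H_n\otimes_\fser\KK)\leq\dim_\KK\Tor^B_{nj}(\KK,\KK).
\]
As $B$ is $2$-Koszul the last term vanishes whenever $j\neq n$, hence so does the first, and therefore $\Tor^A_{nj}(\KK,\KK)=0$ for $j\neq n$, i.e.\ $A$ is $2$-Koszul. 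Alternatively one may quote the result of Drinfeld \cite{Dr92} (see \cite{PP05}*{Chapter 6}) directly: a flat graded deformation of a Koszul algebra has Koszul generic fibre, and its Hilbert series is constant along the family; flatness of $\defo X$ is clear since it is free over $\fser$ in each degree.

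The main obstacle is purely bookkeeping. For the self-contained route one must check carefully that $(C_\bullet)_j$ consists of finite-rank free $\fser$-modules in each bidegree and genuinely computes the graded $\Tor$, and then compare the ranks of the homology modules of $(C_\bullet)_j$ after the two base changes $-\otimes_\fser\KK$ and $-\otimes_\fser\lser$; this is the point where completeness of $\defo X$ and finite-dimensionality of $B$ enter. For the citation route one instead has to verify that $\defo X$ fits the notion of flat graded deformation used in \cite{PP05}*{Chapter 6} — in particular that the relevant spaces of relations have locally constant rank — which again rests on the same Nakayama-type arguments. Neither point is serious, but both deserve to be spelled out, and one of them should be included as the proof.
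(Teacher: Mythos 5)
Your argument is correct, and it in fact covers both of the routes present in the paper. For the corollary itself the paper gives no separate argument: it is stated as a direct consequence of Drinfeld's theorem \cite{Dr92} (via \cite{PP05}*{Chapter 6}), combined with the translation of graded degeneration into a graded Gerstenhaber formal deformation provided by Theorem \ref{teo:caso_gras} -- exactly your ``citation route''. Your self-contained alternative is essentially the mechanism the paper uses later to prove the stronger Proposition \ref{prop:koszul} ($N$-Koszul for all $N$): get $\defo X$ from Theorem \ref{teo:caso_gras}, use Nakayama over $\fser$ to see that $\defo X$, and hence $A$, is generated in degree $1$, and then transfer vanishing of graded $\Tor$ from the special fibre $B$ to the generic fibre $\extesc{\defo X}{\fser}\cong\extesc{A}{\KK}$, concluding by faithfully flat descent that $\Tor^A_{nj}(\KK,\KK)=0$ for $j\neq n$. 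The only real difference is the implementation of the semicontinuity step: you work degreewise with the reduced bar complex, which in each internal degree is a bounded complex of finite-rank free $\fser$-modules because $\dim_\KK B<\infty$, and apply the universal-coefficient/rank inequality over the discrete valuation ring $\fser$; the paper instead takes a resolution $P$ of $\fser$ by finitely generated graded projective $\defo X$-modules and uses Lemma \ref{lemma:complejos} (K\"unneth plus Nakayama) to pass from $H_i(P/tP)=0$ to $H_i(P)=0$ and then to the generic fibre. Your version avoids having to produce and base-change the resolution $P$ (the bar complex is manifestly compatible with $-\otimes_\fser\KK$ and $-\otimes_\fser\lser$) and even yields the quantitative bound $\dim_{\lser}\Tor^{\extesc{A}{\KK}}_{nj}(\lser,\lser)\leq\dim_\KK\Tor^B_{nj}(\KK,\KK)$, at the cost of the bookkeeping you flag (augmentation of $\defo X$ coming from $\defo X_0\cong\fser$, unitality of the deformation, finite rank in each bidegree); the paper's resolution-based formulation is what it needs anyway for the general $N$-Koszul statement. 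Either of your two finishing steps is acceptable; spelling out the bar-complex bookkeeping, as you propose, would make the argument fully self-contained and independent of \cite{Dr92}.
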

In other words, a finite dimensional algebra $A$ is $2$-Koszul if at least one of the 
algebras contained in $\clorb A$ is $2$-Koszul. This result, besides being very 
interesting in its own, is the key to the proof that the cohomology algebra of 
the complement of a supersolvable complex hyperplane arrangement is $2$-Koszul, which is 
an important theorem in algebraic combinatorics \cite{SY97}.

We extend Corollary \ref{coro:drinfeld} to the setting of $N$-Koszul algebras.
\begin{proposition}
\label{prop:koszul}
 Let $A$ and $B$ be finite dimensional graded algebras with unit such that $B$ is a 
graded degeneration of $A$. If $B$ is $N$-Koszul, then $A$ is $N$-Koszul.
\end{proposition}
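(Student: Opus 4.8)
The plan is to deduce the statement from an upper‑semicontinuity property of the graded Betti numbers $\beta_{ij}(C):=\dim_\KK\Tor^C_{ij}(\KK,\KK)$ along the one‑parameter family provided by Theorem~\ref{teo:caso_gras}: that theorem gives a graded Gerstenhaber formal deformation $\defo X$ of $B$ with $\extesc{\defo X}{\fser}\cong\extesc{A}{\KK}$ as graded $\lser$‑algebras, and I will show $\beta_{ij}(A)\leq\beta_{ij}(B)$ for all $i,j$, from which the conclusion is immediate since $A$ (resp.\ $B$) is $N$‑Koszul exactly when $\beta_{ij}(-)$ vanishes for all $i,j$ with $j\neq n(i)$. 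As preliminaries on $\defo X$: because the deformation is graded and $B_0=\KK$, the degree‑zero component $\defo X_0$ is free of rank one over $\fser$ and hence isomorphic to $\fser$ as an $\fser$‑algebra, so projection onto degree zero makes $\defo X$ an augmented graded $\fser$‑algebra with augmentation ideal $\defo J=\bigoplus_{i\geq1}\defo X_i$; and since $\defo X$ is $\fser$‑free of finite rank in each degree while $B$ is finite dimensional, $\defo J$ is $\fser$‑free and $\defo J\otimes_{\fser}\KK\cong J$, where $J=\bigoplus_{i\geq1}B_i$.

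First I would build a comparison complex over $\fser$. Let $M=\max\{i:B_i\neq0\}$ and let $\widetilde C_\bullet=\bigoplus_{n\geq0}\defo J^{\otimes_{\fser}n}$ be the normalized bar complex of the augmented graded $\fser$‑algebra $\defo X$. Fixing an $\fser$‑linear identification $\defo J\cong J\otimes_\KK\fser$, one sees that $\widetilde C_\bullet$ is the $\fser$‑linear extension of the normalized bar complex $C_\bullet$ of $B$ equipped with the deformed differential; in particular it is a complex of free $\fser$‑modules which, in each fixed internal degree $j$, lives in the homological degrees $n$ with $\lceil j/M\rceil\leq n\leq j$ and is finitely generated there. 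Two specializations are relevant. Modulo $t$ one recovers $\widetilde C_\bullet\otimes_{\fser}\KK\cong C_\bullet$, the normalized bar complex of $B$, so $H_{ij}(\widetilde C_\bullet\otimes_{\fser}\KK)\cong\Tor^B_{ij}(\KK,\KK)$. Inverting $t$, and using that $\extesc{\defo X}{\fser}\cong\extesc{A}{\KK}$ as graded $\lser$‑algebras, one gets that $\widetilde C_\bullet\otimes_{\fser}\lser$ is the normalized bar complex of $\extesc{A}{\KK}$, so $H_{ij}(\widetilde C_\bullet\otimes_{\fser}\lser)\cong\Tor^{\extesc{A}{\KK}}_{ij}(\lser,\lser)\cong\Tor^A_{ij}(\KK,\KK)\otimes_\KK\lser$, the last step being flat base change along $\KK\to\lser$.

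Then I would run the numerical comparison degreewise. Because $\widetilde C_\bullet$ is, in each internal degree, a bounded complex of finitely generated modules over the noetherian ring $\fser$, each $H_{ij}(\widetilde C_\bullet)$ is a finitely generated $\fser$‑module; since $\lser$ is flat over $\fser$ we get $\operatorname{rank}_{\fser}H_{ij}(\widetilde C_\bullet)=\dim_\lser\bigl(H_{ij}(\widetilde C_\bullet)\otimes_{\fser}\lser\bigr)=\beta_{ij}(A)$. On the other hand $\fser$ is a discrete valuation ring, hence a principal ideal domain, so the universal coefficient theorem supplies a natural injection $H_{ij}(\widetilde C_\bullet)\otimes_{\fser}\KK\hookrightarrow H_{ij}(\widetilde C_\bullet\otimes_{\fser}\KK)\cong\Tor^B_{ij}(\KK,\KK)$, and for any finitely generated module $N$ over the local ring $\fser$ one has $\operatorname{rank}_{\fser}N\leq\dim_\KK(N\otimes_{\fser}\KK)$. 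Combining these gives $\beta_{ij}(A)\leq\dim_\KK\bigl(H_{ij}(\widetilde C_\bullet)\otimes_{\fser}\KK\bigr)\leq\beta_{ij}(B)$, and therefore: if $B$ is $N$‑Koszul, then $\beta_{ij}(B)=0$ for $j\neq n(i)$, whence $\beta_{ij}(A)=0$ for those $i,j$, i.e.\ $A$ is $N$‑Koszul.

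The part I expect to require the most care is the second paragraph — checking that the bar differential of $\defo X$ really is the one of $B$ corrected by positive powers of $t$ (so that it reduces correctly modulo $t$), that inverting $t$ identifies the complex with the bar complex of $\extesc{A}{\KK}$ compatibly with the internal grading, and that the graded pieces stay finitely generated over $\fser$, the last point being exactly where the finite dimensionality of $B$ enters. Granting those identifications, the remainder is the standard semicontinuity‑of‑Betti‑numbers argument in a one‑parameter family.
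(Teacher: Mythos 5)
Your proof is correct, but it follows a genuinely different route from the paper's. The paper, after invoking Theorem \ref{teo:caso_gras} exactly as you do, first checks via Nakayama that $\defo X$, and hence $A$, is generated in degree $1$, then takes a resolution $P$ of $\fser$ by finitely generated graded projective left $\defo X$-modules, identifies $\KK\otimes_B(P\otimes_{\fser}\KK)$ with $(\fser\otimes_{\defo X}P)/t(\fser\otimes_{\defo X}P)$, and transfers the vanishing of $\Tor^B_{ij}(\KK,\KK)$ to $\Tor^{\defo X}_{ij}(\fser,\fser)$ through Lemma \ref{lemma:complejos}, whose proof is precisely the K\"unneth/universal-coefficient injection you use, followed by Nakayama; the passage to $A$ over $\lser$ is then the same flat base change you perform. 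You instead work with the reduced bar complex of the augmented graded $\fser$-algebra $\defo X$, which in each internal degree is a bounded complex of finitely generated free $\fser$-modules, and you prove the sharper semicontinuity statement $\beta_{ij}(A)\leq\beta_{ij}(B)$ from the universal coefficient sequence over the discrete valuation ring $\fser$ together with $\operatorname{rank}_{\fser}N\leq\dim_\KK(N\otimes_{\fser}\KK)$. What your route buys: you avoid the one nontrivial existence claim in the paper's proof (a finitely generated graded projective resolution of $\fser$ over the noetherian ring $\defo X$) and you obtain the full inequality of graded Betti numbers rather than mere transfer of vanishing. What the paper's route buys: it never needs the bar construction, so it sidesteps the point you pass over quickly, namely that $\defo X$ is unital with unit in degree $0$ so that $\defo X=\fser\oplus\defo J$ is genuinely augmented (this is true --- lift the spanning element of the rank-one module $\defo X_0$ to an idempotent and use freeness plus Nakayama to see it is a two-sided unit --- but deserves a line, since the definition of formal deformation does not mention units). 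Finally, note that the paper's definition of $N$-Koszul includes connectedness and generation in degree $1$; the paper verifies generation in degree $1$ for $A$ explicitly, while in your argument $A_0=\KK$ holds because $A$ is unital with the same Hilbert series as $B$, and generation in degree $1$ follows from your inequality in homological degree $1$ (since $\Tor^A_{1j}(\KK,\KK)$ is the degree-$j$ part of $J_A/J_A^2$) together with graded Nakayama; adding that remark closes the only expository gap.
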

For its proof we need the following Lemma.
\begin{lemma}
\label{lemma:complejos}
 Let $P$ be a complex of finitely generated free $\fser$-modules. For any $i\in\ZZ$, if 
$H_i(P/tP)$ vanishes, then so does $H_i(P)$.
\end{lemma}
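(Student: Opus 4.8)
\textbf{Proof proposal for Lemma \ref{lemma:complejos}.}

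The plan is to argue by contradiction using the fact that $\fser$ is a discrete valuation ring, hence a PID, so every finitely generated module splits as a free part plus a torsion part, and the torsion part is a finite direct sum of copies of $\fser/t^k\fser$. First I would fix $i$ and suppose $H_i(P)\neq 0$. Write $P$ as
\[
\cdots \longrightarrow P_{i+1} \xrightarrow{\ d_{i+1}\ } P_i \xrightarrow{\ d_i\ } P_{i-1}\longrightarrow\cdots,
\]
with each $P_j$ finitely generated and free over $\fser$. Since $H_i(P)$ is a finitely generated $\fser$-module, it is either free of positive rank or has nonzero torsion (or both). The strategy is to detect either possibility after reducing mod $t$, that is, in the complex $P/tP$ of finite-dimensional $\KK$-vector spaces, by a careful bookkeeping of ranks using that $\fser$ is local with residue field $\KK$.

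The key computation I would carry out is with ranks of the boundary maps. Because $\fser$ is a PID, each $d_j$ has a well-defined rank $r_j\colonequals \mathrm{rk}_{\fser}(\im d_j)$, and $\im d_j$ is a free submodule of the free module $P_{j-1}$. By the Smith normal form over the DVR $\fser$, we may choose bases so that $d_j$ is diagonal with entries $t^{a^{(j)}_1},\dots,t^{a^{(j)}_{r_j}}$ (some exponents possibly $0$) followed by zero columns. Reducing mod $t$: the rank of $d_j\otimes_{\fser}\KK$ equals the number of exponents $a^{(j)}_\ell$ that are $0$, so $\mathrm{rk}_\KK(d_j\otimes\KK)\leq r_j$, with equality if and only if $\im d_j$ is a \emph{direct summand} of $P_{j-1}$, equivalently $\mathrm{coker}\, d_j$ is free. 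Now
\[
\dim_\KK H_i(P/tP) = \dim_\KK(P_i\otimes\KK) - \mathrm{rk}_\KK(d_i\otimes\KK) - \mathrm{rk}_\KK(d_{i+1}\otimes\KK),
\]
while $\mathrm{rk}_{\fser}P_i = r_i + (\text{rank of }\ker d_i) = r_i + r_{i+1} + \mathrm{rk}_{\fser}H_i(P)$, using that $\ker d_i$ is free (submodule of a free module over a PID) and that $H_i(P)=\ker d_i/\im d_{i+1}$. Subtracting, one gets
\[
\dim_\KK H_i(P/tP) = \mathrm{rk}_{\fser}H_i(P) + \big(r_i - \mathrm{rk}_\KK(d_i\otimes\KK)\big) + \big(r_{i+1}-\mathrm{rk}_\KK(d_{i+1}\otimes\KK)\big),
\]
and all three summands on the right are nonnegative. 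Hence $H_i(P/tP)=0$ forces $\mathrm{rk}_{\fser}H_i(P)=0$, i.e. $H_i(P)$ is torsion, and simultaneously forces $\im d_{i+1}$ to be a direct summand of $P_i$; but then $H_i(P)=\ker d_i/\im d_{i+1}$ is a submodule of the free module $P_i/\im d_{i+1}$, hence torsion-free, hence zero. That is the contradiction.

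The main obstacle I anticipate is making the rank bookkeeping airtight simultaneously at degree $i$ and degree $i+1$ — in particular verifying that the two correction terms $r_j - \mathrm{rk}_\KK(d_j\otimes\KK)$ are indeed the right non-negative quantities and that vanishing of $H_i(P/tP)$ kills \emph{both} the torsion and the "non-summand" defect at once. An alternative, cleaner route that avoids Smith normal form is the universal coefficients / Tor spectral sequence: there is a short exact sequence $0\to H_i(P)\otimes_{\fser}\KK \to H_i(P/tP)\to \Tor_1^{\fser}(H_{i-1}(P),\KK)\to 0$; if $H_i(P/tP)=0$ then $H_i(P)\otimes_{\fser}\KK=0$, and since $H_i(P)$ is finitely generated over the local ring $\fser$, Nakayama's lemma gives $H_i(P)=0$ directly. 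I would present this second argument as the proof, as it is shorter and the finite generation of $H_i(P)$ over the Noetherian ring $\fser$ is immediate from $P$ being a complex of finitely generated modules; the Smith-normal-form discussion above is then just a sanity check.
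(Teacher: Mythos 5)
Your second argument --- the universal-coefficients exact sequence giving an injection $H_i(P)\otimes_{\fser}\KK\hookrightarrow H_i(P/tP)$, followed by Nakayama using that $H_i(P)$ is finitely generated over the local ring $\fser$ --- is exactly the paper's proof, which phrases the same injection via the K\"unneth formula. The preliminary Smith-normal-form rank bookkeeping is also correct, but it is superfluous once you invoke the exact sequence.
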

\begin{proof}
By the canonical isomorphism $P\otimes_{\fser}\KK\cong P/tP$ and the K\"unneth formula 
there is an injection $H_i(P)/tH_i(P)\to H_i(P/tP)=0$. Then 
$H_i(P)$ vanishes by the Nakayama Lemma.
\end{proof}

\begin{proof}[Proof of Proposition \ref{prop:koszul}.]
By Theorem \ref{teo:caso_gras} there exists a graded Gerstenhaber formal deformation 
$\defo X$ of $B$ such that $\defo X\otimes_{\fser}\lser$ is isomorphic to 
$A\otimes_\KK\lser$ as graded $\lser$-algebras. Since $B$ is generated in degree $1$, the 
Nakayama Lemma and the following isomorphisms of graded algebras
\[
 B \cong \defo X\otimes_{\fser}\KK \cong \defo X/ t\defo X,
\]
imply that $\defo X$ is generated in degree $1$ as a $\fser$-algebra. As a consequence of 
the isomorphism 
$A\otimes_\KK\lser\cong\defo X\otimes_{\fser}\lser$, the algebra $A$ is also 
generated in degree $1$.

Since $\defo X$ is free of finite rank as a $\fser$-module, there exists a resolution 
$P$ of $\fser$ by finitely 
generated projective graded left $\defo X$-modules. So $\Tor^{\defo X}_i(\fser,\fser)$ is 
isomorphic to $\mathrm{H}_i(\fser\otimes_{\defo X}P)$. On the other hand, by the Künneth 
formula, the complex $P\otimes_{\fser}\KK$ is a resolution of $\KK$ by finitely generated 
graded projective 
$B$-modules and therefore $\Tor^B_i(\KK,\KK)$ is isomorphic to 
$\mathrm{H}_i(\KK\otimes_{B}P\otimes_{\fser}\KK)$. Note that
\begin{align*}
 \KK\otimes_B P \otimes_{\fser}\KK &\, \cong \, \KK\otimes_{\defo X} P\otimes_{\fser}\KK 
\\&\,\cong\, \KK\otimes_{\defo X} P \\&\,\cong\, \KK\otimes_{\fser}\fser\otimes_{\defo 
X}P \\&\,\cong\, \fser\otimes_{\defo X}P/t\left(\fser\otimes_{\defo X}P\right).
\end{align*}
Let $i$ and $j$ be natural numbers such that $j\neq n(i)$. Since $B$ is Koszul, the space 
$\Tor^B_{ij}(\KK,\KK)$ vanishes. So, applying Lemma \ref{lemma:complejos} to the 
homogeneous component of degree $j$ of the complex $\fser\otimes_{\defo X} P$, we 
obtain that $\Tor^{\defo X}_{ij}(\fser,\fser)=0$.
On the other hand,
\begin{align*}
 \Tor^A_i(\KK,\KK)\otimes_\KK\lser &\cong \Tor^{A\otimes_\KK\lser}_i(\lser,\lser)\\
 &\cong \Tor^{\defo X\otimes_{\fser}\lser}_i(\lser,\lser)\\
 &\cong \Tor^{\defo X}_i(\fser,\fser)\otimes_{\fser}\lser.
\end{align*}
As a consequence $\Tor^A_{ij}(\KK,\KK)\otimes_\KK\lser$ vanishes, implying that $A$ is 
$N$-Koszul.
\end{proof}

\section{Degenerations of finite dimensional Lie algebras}

Let $V$ be a finite dimensional vector space over an algebraically closed field $\KK$. 
Let $\liea[V]{\KK}$ be the variety of Lie algebra structures on $V$, that is, the 
set of maps $\varphi\in\Hom_\KK(\Lambda^2 V,V)$ satisfying the Jacobi identity
\[
 \varphi(u\wedge\varphi(v\wedge w)) + \varphi(v\wedge\varphi(w\wedge u))
 + \varphi(w\wedge\varphi(u\wedge v) = 0,
\]
for all $u,v,w\in V$. For $g\in\GL(V)$ and $\varphi\in\liea[V]{\KK}$, the map 
$g\circ\varphi\circ(g^{-1}\wedge g^{-1})$ belongs again to $\liea[V]{\KK}$. The orbits of 
this action are in natural correspondence with the isomorphism classes of Lie algebras of 
dimension $\dim_\KK V$. The next result is obtained by applying Theorem \ref{teo:main} to 
the pair $(\liea[V]{\KK},\GL(V))$. The proof is analogous to the case of associative 
algebras and we omit the details. For a Lie algebra $\mathfrak g$, we denote by 
$\clorb{\mathfrak g}$ the Zariski closure in $\liea[V]{\KK}$ of the orbit $\orb{\mathfrak 
g}$. A Lie algebra \textit{$\mathfrak h$ is a degeneration of $\mathfrak g$} if 
$\orb{\mathfrak h}\subseteq \clorb{\mathfrak g}$.

\begin{theorem}
\label{teo:casoLie}
 Let $\mathfrak g$ and $\mathfrak h$ be Lie algebras over $\KK$ of the same dimension. 
The algebra $\mathfrak h$ is a degeneration of $\mathfrak g$ 
if and only if there exists a Gerstenhaber formal deformation $\defo X$ of 
$\mathfrak h$ such that $\extesc{\defo X}{\fser}$ is isomorphic to $\extesc{\mathfrak 
g}{\KK}$ as Lie algebras over $\KK(\!(t)\!)$. 
\end{theorem}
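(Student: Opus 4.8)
The plan is to apply Theorem \ref{teo:main} to the pair $(X,G)=(\liea[V]{\KK},\GL(V))$, exactly as in the proof of Theorem \ref{teo:caso_as}, and then to translate its two conditions into the language of Gerstenhaber formal deformations of Lie algebras. First I would let $\func$ denote the functor of Theorem \ref{teo:main} attached to this pair, so that $\func(\fser)=\Hom(\mathsf{Spec}(\fser),\liea[V]{\KK})$ is the set of $\fser$-Lie algebra structures on $V\otimes_\KK\fser$. Since $V$ is finite dimensional, $V\otimes_\KK\fser$ is canonically $V\llbracket t\rrbracket$, so an element of $\func(\fser)$ is exactly an $\fser$-bilinear alternating bracket on $V\llbracket t\rrbracket$ satisfying the Jacobi identity; expanding it in powers of $t$ gives the family of maps $F_i\in\Hom_\KK(V\wedge V,V)$ of Section \ref{sec:gerst}. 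Because the base ring is $\fser$ and the underlying module $V\llbracket t\rrbracket$ is free, such a bracket is automatically complete and torsion-free, so an element $\defo X$ of $\func(\fser)$ with $\func(\proj)(\defo X)=\mathfrak h$ is precisely a Gerstenhaber formal deformation of $\mathfrak h$.

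Next I would unwind the two equalities of Theorem \ref{teo:main} with $x=\mathfrak g$ and $y=\mathfrak h$. Equation \eqref{eqteo2}, $\func(\proj)(\defo X)=\mathfrak h$, is, by the previous paragraph, the statement that $\defo X$ is a Gerstenhaber formal deformation of $\mathfrak h$. For Equation \eqref{eqteo1} one uses $\gsch(\lser)=\GL(V)(\lser)=\GL(V\otimes_\KK\lser)$ and observes that $\func(\inc)(\defo X)$ and $\func(\inc\circ\unit)(\mathfrak g)$ are the brackets of the $\lser$-Lie algebras $\extesc{\defo X}{\fser}$ and $\extesc{\mathfrak g}{\KK}$; so \eqref{eqteo1} holds for some $g\in\gsch(\lser)$ precisely when there is an isomorphism of $\lser$-Lie algebras $\extesc{\mathfrak g}{\KK}\to\extesc{\defo X}{\fser}$. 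Combining the two translations, the conditions of Theorem \ref{teo:main} are equivalent to the existence of a Gerstenhaber formal deformation $\defo X$ of $\mathfrak h$ with $\extesc{\defo X}{\fser}\cong\extesc{\mathfrak g}{\KK}$ as $\lser$-Lie algebras, which is exactly the statement of Theorem \ref{teo:casoLie}.

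I do not anticipate a real obstacle: all the substance is in Theorem \ref{teo:main}, and what remains is the bookkeeping of matching the functor-of-points description of $\func(\fser)$ and $\gsch(\lser)$ with the formal-deformation formalism of Section \ref{sec:gerst}. The only point worth making explicit is the one noted above, that completeness and torsion-freeness are automatic over $\fser$ with free underlying module; this is what makes the correspondence a bijection and lets Theorem \ref{teo:main} be read off verbatim in these terms, in complete parallel with the proof of Theorem \ref{teo:caso_as}.
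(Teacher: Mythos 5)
Your proposal is correct and follows exactly the route the paper indicates: apply Theorem \ref{teo:main} to the pair $(\liea[V]{\KK},\GL(V))$ and translate its two conditions into the Gerstenhaber formal-deformation language, in parallel with the proof of Theorem \ref{teo:caso_as}. The paper omits these details as routine, and your bookkeeping (including the remark that completeness and torsion-freeness are automatic for a free $\fser$-module) fills them in correctly.
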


We give an application of Theorem \ref*{teo:casoLie} to the description of 
the degenerations of Lie algebras of dimension $3$. These have been already 
computed by Agaoka in \cite{Aga99}. We show how Theorem \ref*{teo:casoLie} can be used to 
obtain Agaoka's result by means of homological algebra.

\subsection*{Classification of \texorpdfstring{$3$}{TEXT}-dimensional Lie algebras}
We begin by recalling the classification of the isomorphism classes of $3$-dimensional 
Lie algebras. See for example 
\cite{FH91}.

If $\mathfrak g$ is a Lie algebra of dimension $3$ over an algebraically 
closed field $\KK$, then $\mathfrak g$ is isomorphic to one of the algebras in 
the following list.

\begin{center}
\captionof{table}{}
\label{tabla:1}
\begin{tabular}{r|l}
  & non-trivial bracket operations \\ \hline
  $L_0$ & \\
  $L_1$ & $[x,y]=z$\\
  $L_2$  & $[x,y]=y$\\
  $L_3$  & $[x,y]=y$, $[x,z]=y+z$\\
  ($\alpha\in\KK^\times$) $L_4(\alpha)$ & $[x,y]=y$, $[x,z]=\alpha z$\\
  $L_5$ & $[x,y]=y$, $[x,z]=-z$, $[y,z]=x$
\end{tabular}
\end{center}
For $0\leq i,j\leq 5$, the algebras $L_i$ and $L_j$ are not 
isomorphic if 
$(i,j)\neq(4,4)$, and $L_4(\alpha)$ is isomorphic to $L_4(\beta)$ if and only 
if $\alpha=\beta^{-1}$ or $\alpha=\beta$.

Note that if $\FF$ is an 
algebraically closed field such that $\KK\subseteq \FF$, for example the 
algebraic closure of $\KK(\!(t)\!)$, then every Lie algebra over 
$\FF$ is either isomorphic to $L_i\otimes_\KK\FF$, for some $i\neq4$, or it is 
isomorphic to $L_4(\alpha)$ for some $\alpha\in\FF$.

Let $\mathfrak g$ be a $3$-dimensional Lie algebra. There is a method given in 
\cite{FH91} to determine which algebra of the above list is 
isomorphic to $\mathfrak g$, which we shall now describe. Let $n$ be the rank of 
$\mathfrak g$, that is, the dimension of $[\mathfrak g,\mathfrak g]$. Note that regarding 
the bracket as a map $[\, , \,]:\Lambda^2\mathfrak g\to\mathfrak g$, we obtain that 
$n=\dim_\KK(\im[\, ,\,])$. 
If $n=0$, then $\mathfrak g$ is isomorphic to $L_0$, and if $n=3$, then $\mathfrak 
g$ is isomorphic to $L_5$.

Suppose now $n=1$. In this case there exists $x\in\mathfrak g$ such that $\{x\wedge 
y: y\in\mathfrak g\} = \ker[\, ,\,]$. Let $y,z\in\mathfrak g$ be such 
that $\{x,y,z\}$ is a basis of $\mathfrak g$ and let $a,b,c\in\KK$ be such 
that $[y,z]=ax + by + cz$. If $b=c=0$, then $\mathfrak g$ is isomorphic to 
$L_1$, and if $(b,c)\neq(0,0)$, then $\mathfrak g$ is isomorphic to $L_2$.

Suppose $n=2$. 
Let $\{x,y,z\}$ be a basis of $\mathfrak 
g$ such that $\{y,z\}$ is a basis of $[\mathfrak g,\mathfrak g]$. It is not difficult to 
see that $\mathsf{ad}(x):[\mathfrak g,\mathfrak g]\to[\mathfrak g,\mathfrak g]$ is an 
isomorphism. If 
$\mathsf{ad}(x)$ is not diagonalizable, then $\mathfrak g$ is isomorphic to 
$L_3$. If $\mathsf{ad}(x)$ is diagonalizable and $\lambda_1,\lambda_2$ are its 
eigenvalues, then $\mathfrak g$ is isomorphic to $L_4(\lambda_1/\lambda_2)$.

\subsection*{Cohomology of \texorpdfstring{$3$}{TEXT}-dimensional Lie algebras}
Let $\mathfrak g$ be a Lie algebra of dimension $3$ over an 
algebraically closed field $\KK$. Recall that the spaces 
$\mathrm{H}^i_{\mathsf{Lie}}(\mathfrak g,\mathfrak g)$ are the homology spaces 
of the complex
\[
 \xymatrix{
  0 \ar[r]&
  \mathfrak g \ar[r]^-{d_1} & 
  \Hom_\KK(\mathfrak g,\mathfrak g) \ar[r]^-{d_2} &
  \Hom_\KK(\Lambda^2\mathfrak g,\mathfrak g) \ar[r]^-{d_3} &
  \Hom_\KK(\Lambda^3\mathfrak g,\mathfrak g) \ar[r] & 
  0,
 }
\]
We may identify the spaces $\Hom_\KK(\Lambda^i\mathfrak g,\mathfrak g)$ and 
$\Lambda^i\mathfrak g^*\otimes_\KK\mathfrak g$ for all $i$, where $\mathfrak g^* = 
\Hom_\KK(\mathfrak g, \KK)$. Let $\{x,y,z\}$ be a 
basis of $\mathfrak g$ and $\{\hat x,\hat y,\hat z\}\subseteq \mathfrak g^*$ its dual 
basis. The following table shows for each algebra $\mathfrak g$ a list of 
elements in $\Lambda^2\mathfrak g^*\otimes_\KK \mathfrak g$ such that their classes in 
$\mathrm{H}^2_{\mathsf{Lie}}(\mathfrak g,\mathfrak g)$ form a basis.

\medskip

\begin{center}
\captionof{table}{}
\label{tabla:2}
 \begin{tabular}{r|l}
  $L_1$ & $\hat y\wedge \hat x\otimes x$, $\hat y\wedge\hat x \otimes y$, $\hat 
z\wedge\hat x\otimes y$, $\hat z\wedge\hat y\otimes x$, $\hat z\wedge\hat 
x\otimes x - \hat z\wedge\hat y\otimes y$ \\
  $L_2$ & $\hat z\wedge\hat x\otimes z$ \\
  $L_3$ & $\hat y\wedge\hat x\otimes z$ \\
  $(\alpha\neq -1,1)$ $L_4(\alpha)$ & $\hat z\wedge \hat x\otimes z$ \\
  $L_4(-1)$ & $\hat z\wedge\hat x\otimes z$, $\hat z\wedge\hat y\otimes x$\\
  $L_4(1)$ & $\hat y\wedge\hat x\otimes z$, $\hat z\wedge \hat x\otimes y$, $\hat 
z\wedge\hat x\otimes z$
 \end{tabular}
\end{center}
The space $\mathrm{H}^2_{\mathsf{Lie}}(L_0,L_0)$ is $\Lambda^2L_0^*\otimes_\KK L_0$ 
and the space $\mathrm{H}^2_{\mathsf{Lie}}(L_5,L_5)$ is zero. This is 
obtained by straightforward computations using the above complex.

\subsection*{Degenerations of \texorpdfstring{$3$}{TEXT}-dimensional Lie algebras}

Let $\KK$ be an algebraically closed field. From now on we assume all 
$3$-dimensional Lie algebras over $\KK$ to have the same underlying vector space $V$ with 
basis $\{x,y,z\}$. The next proposition shows in a particular case how Theorem 
\ref{teo:casoLie} and the theory of Gerstenhaber formal deformations can be used to 
determine the Zariski closure of an orbit.

\begin{proposition}
\label{prop:lie_l2}
 If $\orb{L_2}$ is contained in $\clorb{\mathfrak g}$ for some Lie algebra $\mathfrak g$, 
then $\orb g= \orb{L_2}$.
\end{proposition}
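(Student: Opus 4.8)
The plan is to apply Theorem~\ref{teo:casoLie}. The inclusion $\orb{L_2}\subseteq\clorb{\mathfrak g}$ provides a Gerstenhaber formal deformation $\defo X$ of $L_2$ such that $\extesc{\defo X}{\fser}$ is isomorphic to $\extesc{\mathfrak g}{\KK}$ as $\KK(\!(t)\!)$-Lie algebras. I shall show that $\defo X$ is necessarily trivial. Granting this, $\extesc{\mathfrak g}{\KK}\cong\extesc{L_2}{\KK}$, and I conclude $\mathfrak g\cong L_2$ by observing that the rank $\dim_\KK[\mathfrak g,\mathfrak g]$ and the property of not being nilpotent are both invariant under extension of scalars to $\overline{\lser}$, and that together they characterise $L_2$ among the $3$-dimensional Lie algebras, since the algebras of rank $1$ are exactly $L_1$ and $L_2$ and $L_1$ is nilpotent. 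In particular $\orb{\mathfrak g}=\orb{L_2}$.

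Assume, for contradiction, that $\defo X$ is nontrivial. By Table~\ref{tabla:2} the space $\mathrm{H}^2_{\mathsf{Lie}}(L_2,L_2)$ is one-dimensional, generated by the class of $\hat z\wedge\hat x\otimes z$. Using the normal form for formal deformations recalled in Section~\ref{sec:gerst}, I may replace $\defo X$ by an equivalent deformation whose structure maps satisfy $F_i=0$ for $i<n$ and $F_n$ equal to a nonzero multiple of $\hat z\wedge\hat x\otimes z$, for some integer $n\geq1$; since equivalent deformations are isomorphic as $\fser$-algebras, this leaves $\extesc{\defo X}{\fser}$ unchanged up to isomorphism. In the basis $\{x,y,z\}$ the bracket of $\defo X$ then satisfies $[x,y]_{\defo X}=y+O(t^{n+1})$, $[x,z]_{\defo X}=a\,t^{n}z+O(t^{n+1})$ for some $a\in\KK^\times$, and $[y,z]_{\defo X}=O(t^{n+1})$.

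The heart of the argument is an analysis of $\mathsf{ad}_{\defo X}(x)$, viewed as an endomorphism of the free $\fser$-module underlying $\defo X$. Since $[x,x]_{\defo X}=0$, its first column vanishes, so its characteristic polynomial factors as $T(T^{2}-pT+q)$ with $p\in\fser$ congruent to $1$ modulo $t$ and $q\in\fser$ of valuation exactly $n$, the latter coming from the term $a\,t^{n}z$ of $[x,z]_{\defo X}$. As $T=1$ is a simple root of $T^{2}-pT+q$ modulo $t$, Hensel's Lemma lifts it to a root $\lambda_{+}\in\fser$, which is a unit; the other root is then $\lambda_{-}=q/\lambda_{+}\in\fser$, of valuation $n$. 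Thus $\mathsf{ad}_{\defo X}(x)$ has three pairwise distinct eigenvalues $0,\lambda_{+},\lambda_{-}\in\fser$, and over $\KK(\!(t)\!)$ there is a basis $\{x,u_{+},u_{-}\}$ of the $\KK(\!(t)\!)$-Lie algebra $\extesc{\defo X}{\fser}$ in which $[x,u_{\pm}]=\lambda_{\pm}u_{\pm}$. By the Jacobi identity $[u_{+},u_{-}]$ is an eigenvector of $\mathsf{ad}_{\defo X}(x)$ for the eigenvalue $\lambda_{+}+\lambda_{-}$; this element has valuation $0$, so it is none of $0,\lambda_{+},\lambda_{-}$, and hence $[u_{+},u_{-}]=0$. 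After rescaling $x$ by $\lambda_{+}^{-1}$ I conclude that $\extesc{\defo X}{\fser}$ is isomorphic, as a Lie algebra over $\KK(\!(t)\!)$, to $L_4(\alpha)$ with $\alpha=\lambda_{-}/\lambda_{+}$ of valuation $n\geq1$.

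To close the argument I would base-change to $\overline{\lser}$. Since $\alpha\neq0$, the algebra $L_4(\alpha)$ over $\overline{\lser}$ is of type $L_4$ (its derived algebra is $2$-dimensional); hence, if $\mathfrak g\otimes_\KK\overline{\lser}$ were isomorphic to it, the classification of $3$-dimensional Lie algebras over the algebraically closed field $\overline{\lser}$ would force $\mathfrak g\cong L_4(\beta)$ for some $\beta\in\KK^{\times}$, and then $\beta$ and $\alpha$ would define isomorphic algebras over $\overline{\lser}$, so $\beta\in\{\alpha,\alpha^{-1}\}$. This is impossible, as $v(\beta)=0$ while $v(\alpha)=n\neq0$. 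This contradicts $\extesc{\defo X}{\fser}\cong\extesc{\mathfrak g}{\KK}$; therefore $\defo X$ is trivial, and the argument ends as in the first paragraph. The point I expect to require care is the uniform handling of the higher-order terms $F_{n+1},F_{n+2},\dots$ of the deformation: they affect $\mathsf{ad}_{\defo X}(x)$ only through $O(t^{n+1})$ corrections, which is precisely why the valuations of $p$, $q$ and of $\lambda_{\pm}$ do not depend on them.
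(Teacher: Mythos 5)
Your proposal is correct, and after the common first step (Theorem \ref{teo:casoLie} plus the normalization $F_i=0$ for $i<n$, $F_n$ a nonzero multiple of $\hat z\wedge\hat x\otimes z$, which is exactly how the paper also starts) it proceeds by a genuinely different route. The paper writes out the structure constants $f_i,g_i,h_i$ of $\defo X$ explicitly and argues case by case on the rank of $\mathfrak g$: a nonvanishing minor shows the rank of $\extesc{\defo X}{\fser}$ is at least $2$; rank $2$ is excluded by comparing the valuations forced on an eigenvalue of $\mathsf{ad}(x)$ by the trace and the determinant of its matrix in the basis $\{[y,x],[z,x]\}$; and rank $3$ is excluded by deriving relations from the Jacobi identity and checking, with a \emph{Mathematica} computation, that the relevant $3\times3$ determinant vanishes. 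You instead identify the generic fibre outright: using that the first column of $\mathsf{ad}_{\defo X}(x)$ vanishes, Hensel's lemma in $\fser$ splits the characteristic polynomial with eigenvalues $0$, a unit $\lambda_+$, and $\lambda_-$ of valuation exactly $n$, and the Jacobi identity forces $[u_+,u_-]=0$, so $\extesc{\defo X}{\fser}\cong L_4(\alpha)$ over $\lser$ with $v(\alpha)=n\geq1$; since $\mathfrak g$ is defined over $\KK$, its possible $L_4$-parameters have valuation $0$, giving the contradiction after base change to $\overline{\lser}$. This buys a uniform treatment of the ranks $2$ and $3$ (in particular it removes the computer-algebra verification) and a conceptual description of the deformed algebra, at the price of invoking Hensel's lemma and the extension of the valuation to $\overline{\lser}$. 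One small repair: to conclude $[u_+,u_-]=0$ you argue that $\lambda_++\lambda_-$ has valuation $0$ and hence differs from $0,\lambda_+,\lambda_-$; valuation alone does not separate it from $\lambda_+$, but $\lambda_++\lambda_-=\lambda_+$ would force $\lambda_-=0$, contradicting $v(\lambda_-)=n<\infty$, so the conclusion stands.
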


\begin{proof}

Let $\mathfrak h=L_2$. Suppose that $\orb{\mathfrak h}\subseteq\clorb{\mathfrak g}$ for 
some Lie algebra $\mathfrak g$ not isomorphic to $\mathfrak h$. By Theorem 
\ref{teo:casoLie} there exists a nontrivial Gerstenhaber formal deformation $\defo X$ of 
$\mathfrak h$ such that 
\[\extesc{\defo X}{\fser}\cong \extesc{\mathfrak g}{\KK}.\]  
By Table \ref*{tabla:2} and the remark at the end of Section \ref{sec:gerst} we may 
assume without loss of generality that there exist 
$\lambda\in\KK^\times$ and $n\geq1$ such that for all $v,w\in V$,
\begin{align}
\label{ecuacion1}
 [v,w]_{\defo X} = [v,w]_{\mathfrak h} + F_n(v,w)t^n + F_{n+1}(v,w)t^{n+1}+\cdots,
\end{align}
for some maps $F_i:V\wedge V\to V$ for all $i\geq n$ such that $F_n=\lambda \hat 
z\wedge\hat x\otimes z$.

Since $V$ is finite dimensional, the spaces $V\llbracket t 
\rrbracket$ and $V\otimes_\KK\fser$ are isomorphic and so $V\llbracket t 
\rrbracket$ is the free $\fser$-module with basis $\{x,y,z\}$. Specializing Equation 
\eqref{ecuacion1} at $v,w\in\{x,y,z\}$ we obtain that there exist elements 
$f_i,g_i,h_i\in\KK\llbracket t\rrbracket$, for $i\in\{1,2,3\}$, such that
\begin{align*}
 &[y,x]_\defo X = t^{n+1}f_1x + (-1+t^{n+1}f_2)y + t^{n+1}f_3z,\\
 &[z,x]_\defo X = t^{n+1}g_1x + t^{n+1}g_2y + (\lambda t^n + t^{n+1}g_3)z,\\
 &[z,y]_\defo X = t^{n+1}h_1x + t^{n+1}h_2y + t^{n+1}h_3z.
\end{align*} 
Since the minor
\[(-1+t^{n+1}f_2)(\lambda t^n + g_3t^{n+1}) - t^{n+1}g_2t^{n+1}f_3 \equiv -\lambda t^n 
\,\textrm{ mod }t^{n+1},\] we deduce that it is different from zero and therefore the set 
$\{x,[y,x]_\defo X,[z,x]_\defo X\}$ is
basis of $\extesc{\defo X}{\fser}$. As a 
consequence the rank of $\extesc{\defo X}{\fser}$ is at least 
$2$. Thus $\mathfrak g$ is of rank at least $2$ and not isomorphic to $L_0, L_1$ or 
$L_2$. 

Suppose the rank of $\mathfrak g$ is $2$, that is, $\mathfrak g$ is isomorphic 
either to $L_3$ or to $L_4(\alpha)$ for some $\alpha\in\KK^\times$. Let $\FF$ be the 
algebraic closure of $\KK(\!(t)\!)$. Then, the algebra $\defo X\otimes_{\fser}\FF$ is 
isomorphic either to 
$L_3\otimes_\KK\FF$ or to 
$L_4(\alpha)\otimes_{\KK(\!(t)\!)}\FF$ for some $\alpha\in\KK^\times$. 
In any case, if $\mu_1,\mu_2\in\FF$ are the 
eigenvalues of $\mathsf{ad}(x)$, we obtain $\mu_1=\beta\mu_2$ for some 
$\beta\in\KK^\times$.
The matrix of the map 
$\mathsf{ad}(x)$ in the basis $\{[y,x],[z,x]\}$ is

\[
 \begin{pmatrix}
  1-t^{n+1}f_2 & -t^{n+1}g_2 \\
  -t^{n+1}f_3 & -\lambda t^n - t^{n+1}g_3
 \end{pmatrix}
\]
and its trace and determinant are 
\begin{align*}
&(1+\beta)\mu_2 = -\mathrm{tr}(\mathsf{ad}(x))=-1+t^{n+1}f_2 +\lambda t^n + 
t^{n+1}g_3,\\
&\beta\mu_2^2 = \det(\mathsf{ad}(x))= (1-t^{n+1}f_2)(-\lambda t^n - t^{n+1}g_3).
\end{align*}
Recall that we denote $v:\KK(\!(t)\!)\to\ZZ$ the valuation of the Laurent series field. 
The first equality implies that $\mu_2\in\KK\llbracket t\rrbracket$ and that 
$v(\mu_2)=0$. On the other hand, the second equality implies that $v(\mu_2)>1$. This is a 
contradiction and we deduce $\mathfrak g$ is not of rank $2$.

Let us see that $\mathfrak g$ does not have rank $3$. From the equation
\[
 0=[x,[y,z]_\defo X]_\defo X + [y,[z,x]_\defo X]_\defo X + 
[z,[x,y]_\defo X]_\defo X,
\]
we deduce 
\begin{align*}
 &h_1 - \lambda h_1 t^n + t^{n+1}(f_1h_2 - f_2h_1 + g_1h_3 - g_3h_1)=0,\\
 &g_1 + \lambda h_2 t^n - t^{n+1}(f_2g_1 - f_1g_2 + g_2h_3 - g_3h_2)=0,\\
 &h_3 - \lambda f_1 t^n + t^{n+1}(f_3g_1 - f_1g_3 + f_3h_2 - f_2h_3)=0.
\end{align*}
From these equations we can express $h_1$, $g_1$ and $h_3$ as polynomials on 
$f_1,f_2,f_3,g_2,g_3,h_1$ and $h_2$. Using these identities, the determinant of the matrix
\[
 \begin{pmatrix}
  t^{n+1}f_1 & -1+t^{n+1}f_2 & t^{n+1}f_3 \\
  t^{n+1}g_1 & t^{n+1}g_2 & \lambda t^n + t^{n+1}g_3\\
  t^{n+1}h_1 & t^{n+1}h_2 & t^{n+1}h_3
 \end{pmatrix}
\]
is zero. We performed these computations using \textit{Mathematica 10} \cite{WR14}.
Thus $\extesc{\defo X}{\fser}$ is not of rank $3$ and therefore 
$\mathfrak g$ is not of rank $3$.

As a consequence, the rank of the algebra $\mathfrak g$ is different 
from $0,1,2$ or $3$. This is a contradiction coming from the assumption that 
$\mathfrak g$ is not isomorphic to $\mathfrak h$.
\end{proof}

The methods of Proposition \ref{prop:lie_l2} give an homological algebra tool to decide 
whether the degeneration relation $\orb{\mathfrak h} \subseteq \clorb{\mathfrak g}$ 
holds for some Lie algebras $\mathfrak h$ and $\mathfrak g$. Since each set 
$\clorb{\mathfrak g}$ is a union of orbits, a complete description of the degeneration 
relation determines them. Using this technique we were able to recover the full 
description of the sets $\clorb{\mathfrak g}$ for all $3$-dimensional Lie algebras 
given by Agaoka in \cite{Aga99}*{Proposition 5} with geometric methods. We omit the 
details which involve computations similar to the ones given in the proof of Proposition 
\ref{prop:lie_l2}.

\begin{bibdiv}
\begin{biblist}
\bib{Aga99}{article}{
   author={Agaoka, Y.},
   title={On the variety of 3-dimensional Lie algebras},
   note={Towards 100 years after Sophus Lie (Kazan, 1998)},
   journal={Lobachevskii J. Math.},
   volume={3},
   date={1999},
   pages={5--17 (electronic)},
}

\bib{Be01}{article}{
   author={Berger, Roland},
   title={Koszulity for nonquadratic algebras},
   journal={J. Algebra},
   volume={239},
   date={2001},
   number={2},
   pages={705--734},
}





\bib{Dr92}{article}{
   author={Drinfeld, V. G.},
   title={On quadratic commutation relations in the quasiclassical case
   [translation of {\it Mathematical physics, functional analysis
   (Russian)}, 25--34, 143, ``Naukova Dumka'', Kiev, 1986; MR0906075
   (89c:58048)]},
   note={Selected translations},
   journal={Selecta Math. Soviet.},
   volume={11},
   date={1992},
   number={4},
   pages={317--326},
}


\bib{FH91}{book}{
   author={Fulton, William},
   author={Harris, Joe},
   title={Representation theory},
   series={Graduate Texts in Mathematics},
   volume={129},
   note={A first course;
   Readings in Mathematics},
   publisher={Springer-Verlag, New York},
   date={1991},
   pages={xvi+551},
}

\bib{Ga74}{article}{
   author={Gabriel, Peter},
   title={Finite representation type is open},
   conference={
      title={Proceedings of the International Conference on Representations
      of Algebras},
      address={Carleton Univ., Ottawa, Ont.},
      date={1974},
   },
   book={
      publisher={Carleton Univ., Ottawa, Ont.},
   },
   date={1974},
   pages={23 pp. Carleton Math. Lecture Notes, No. 9},
}

\bib{Ge95}{article}{
   author={Geiss, Christof},
   title={On degenerations of tame and wild algebras},
   journal={Arch. Math. (Basel)},
   volume={64},
   date={1995},
   number={1},
   pages={11--16},
}

\bib{GP95}{article}{
   author={Geiss, Christoff},
   author={de la Pe\~na, Jos\'e Antonio},
   title={On the deformation theory of finite-dimensional algebras},
   journal={Manuscripta Math.},
   volume={88},
   date={1995},
   number={2},
   pages={191--208},
}

\bib{Ge64}{article}{
   author={Gerstenhaber, Murray},
   title={On the deformation of rings and algebras},
   journal={Ann. of Math. (2)},
   volume={79},
   date={1964},
   pages={59--103},
}


\bib{GMMVZ04}{article}{
   author={Green, E. L.},
   author={Marcos, E. N.},
   author={Mart\'\i nez-Villa, R.},
   author={Zhang, Pu},
   title={$D$-Koszul algebras},
   journal={J. Pure Appl. Algebra},
   volume={193},
   date={2004},
   number={1-3},
   pages={141--162},
}

\bib{GO88}{article}{
   author={Grunewald, Fritz},
   author={O'Halloran, Joyce},
   title={A characterization of orbit closure and applications},
   journal={J. Algebra},
   volume={116},
   date={1988},
   number={1},
   pages={163--175},
}

\bib{Ha06}{article}{
   author={Han, Yang},
   title={Hochschild (co)homology dimension},
   journal={J. London Math. Soc. (2)},
   volume={73},
   date={2006},
   number={3},
   pages={657--668},
}



\bib{Ko05}{book}{
   author={Kodaira, Kunihiko},
   title={Complex manifolds and deformation of complex structures},
   series={Classics in Mathematics},
   edition={Reprint of the 1986 English edition},
   note={Translated from the 1981 Japanese original by Kazuo Akao},
   publisher={Springer-Verlag, Berlin},
   date={2005},
   pages={x+465},
}

\bib{NR64}{article}{
   author={Nijenhuis, Albert},
   author={Richardson, R. W., Jr.},
   title={Cohomology and deformations of algebraic structures},
   journal={Bull. Amer. Math. Soc.},
   volume={70},
   date={1964},
   pages={406--411},
}


\bib{PP05}{book}{
   author={Polishchuk, Alexander},
   author={Positselski, Leonid},
   title={Quadratic algebras},
   series={University Lecture Series},
   volume={37},
   publisher={American Mathematical Society, Providence, RI},
   date={2005},
   pages={xii+159},
}

\bib{Pr70}{article}{
   author={Priddy, Stewart B.},
   title={Koszul resolutions},
   journal={Trans. Amer. Math. Soc.},
   volume={152},
   date={1970},
   pages={39--60},
}

\bib{Ser00}{book}{
   author={Serre, Jean-Pierre},
   title={Local algebra},
   series={Springer Monographs in Mathematics},
   note={Translated from the French by CheeWhye Chin and revised by the
   author},
   publisher={Springer-Verlag, Berlin},
   date={2000},
   pages={xiv+128},
}

\bib{Ser79}{book}{
   author={Serre, Jean-Pierre},
   title={Local fields},
   series={Graduate Texts in Mathematics},
   volume={67},
   note={Translated from the French by Marvin Jay Greenberg},
   publisher={Springer-Verlag, New York-Berlin},
   date={1979},
   pages={viii+241},
}

\bib{SY97}{article}{
   author={Shelton, Brad},
   author={Yuzvinsky, Sergey},
   title={Koszul algebras from graphs and hyperplane arrangements},
   journal={J. London Math. Soc. (2)},
   volume={56},
   date={1997},
   number={3},
   pages={477--490},
}

\bib{WR14}{article}{
   author={Wolfram Research, Inc.},
   title={Mathematica},
   edition={10.0},
   publisher={Wolfram Research, Inc.},
   date={2014}
}


\end{biblist}
\end{bibdiv}

\end{document}